\newcommand{\Z}{\mathbb{Z}}
\newcommand{\supp}{{\rm supp}\, }
\newcommand{\sgn}{{\rm sgn}}
\newcommand{\N}{\mathbb{N}}
\newcommand{\C}{\mathbb{C}}
\renewcommand{\P}{{\mathbb{P}}}
\newcommand{\E}{{\mathbb{E}}}
\newcommand{\Tr}{{\operatorname{Tr}}}
\def\CC{{\mathbb C}}
\newcommand{\newtimes}{\cdot}
\newcommand{\sparsity}{S}
\newcommand{\transk}{k}
\title{Sparsity in Time--Frequency Representations}
\author{
        G\"otz E. Pfander\footnotemark[1]~ and
        Holger Rauhut\footnotemark[2]
        }
\begin{document}
\maketitle
\abstract{We consider signals and operators in finite dimension which have sparse time-frequency representations. As main result we show that an $\sparsity$-sparse Gabor representation in $\CC^n$ with respect to a random unimodular window can be recovered by Basis Pursuit with high probability provided that $\sparsity \leq Cn/\log(n)$. Our results are applicable to the channel estimation problem in wireless communications and they establish the usefulness of a class of measurement matrices for compressive sensing.}

\renewcommand{\thefootnote}{\fnsymbol{footnote}}

\footnotetext[1]{School of Engineering and Science, Jacobs
University Bremen, 28759 Bremen, Germany, g.pfander@jacobs-university.de}

\footnotetext[2]{Numerical Harmonic Analysis Group, Faculty of
Mathematics, University of Vienna, Nordbergstrasse 15, A-1090
Vienna, Austria, holger.rauhut@univie.ac.at\\
H.R. acknowledges support by the European Union's Human Potential
Programme through an Individual Marie Curie Fellowship, contract number
MEIF CT-2006-022811}

\renewcommand{\thefootnote}{\arabic{footnote}}
\pagenumbering{arabic} \maketitle

\numberwithin{equation}{section}

\noindent
{\bf Keywords.} Time-frequency representations, sparse representations, sparse signal recovery, Basis Pursuit, operator identification, random matrices.

\noindent
{\bf AMS Subject Classification.} 42C40, 15A52, 90C25.

\section{Introduction}

Efficient algorithms aiming at the recovery of signals and operators
from a restricted number of measurements must be based on
some {\it a-priori}\, information about the object under investigation.
In a large body of recent work, the signal or operator at hand is assumed to
have a sparse representation in a given dictionary. A typical example in
this realm is the recovery of vectors that are
sparse in the Euclidean basis, that is, of vectors which have a limited number
of nonzero components at unknown locations. Such a vector is to be
determined efficiently by a small number of linear measurements which are given
by inner products with appropriately chosen analysis vectors.

The difficulty in this body of work lies in the fact that sparsity
conditions as those mentioned above define nonlinear subspaces of
linear signal or operator spaces. To circumvent a combinatorial and
therefore unfeasible exhaustive search, efficient alternatives such
as $\ell_1$-minimization (Basis Pursuit) 
and greedy algorithms such as Matching Pursuits
have been proposed 
in the sparse representations and compressed sensing literature,
see, for example, \cite{chdosa99,do04,carota06,carota06-1,badadewa06,kura06,ra05-7,gitr05,tr04}.
In compressed sensing one commonly uses linear random measurements
for the recovery of a sparse signal
with high probability.
So far, mainly random Gaussian, Bernoulli and partial Fourier measurements
have been considered successfully
\cite{cata06,do04,badadewa06,rascva06,ruve06}.
A typical result states that a signal of length $N$ with at most $\sparsity$ non-zero
entries can be recovered
from $n$ randomly selected samples of its Fourier transform with
high probability provided $\sparsity \leq C n/\log(N)$ \cite{carota06,ra05-7}.

In this paper, we consider sparse representations
in terms of time--frequency shift dictionaries, and investigate
recovery conditions similar to the ones for Gaussian, Bernoulli and Fourier measurements.
Here, $T_k$ denotes the cyclic shift respectively translation
operator and $M_\ell$  the modulation operator respectively frequency shift
operator on $\CC^n$, defined by \begin{equation}\label{eq:trans_mod}
(T_\transk h)_q=h_{\transk+q \mod n}\quad\mbox{and}\quad (M_{\ell}h)_q = e^{2\pi i\ell
q/n} h_q.
\end{equation}
Then $\pi(\lambda) = M_\ell T_\transk$, $\lambda = (\transk,\ell)$,
is a time-frequency shift and the system
$\{\pi(\lambda): \lambda \in \Z_n{\times}\Z_n\}$, $\Z_n = \{0,1,\hdots,n-1\}$,
of all time-frequency shifts
forms a basis of $\CC^{n{\times} n}$ \cite{LPW05,krpfra07}. For a non-zero vector $g$, the so-called window, the set
\begin{equation}\label{GaborSystem}
\{\pi(\lambda) g: \lambda \in \Z_n{\times} \Z_n\}
\end{equation}
is called a Gabor system \cite{Gro01} and the matrix
$\Psi_g \in \C^{n {\times} n^2}$ whose columns are the members
$\pi(\lambda) g$, $\lambda \in \Z_n{\times} \Z_n$ of a Gabor system
is referred to as Gabor synthesis matrix. The Gabor system given in
\eqref{GaborSystem} is a tight frame in $\C^n$ whenever
$g\neq 0$ \cite{LPW05,Chr03}.

A vector $x$ is called $\sparsity$-sparse if it has at most $\sparsity$ non-zero
entries; formally
$\|x\|_0 := |\supp x| = 
\#\{\lambda: x_\lambda \neq 0\} \leq \sparsity$.

Our analysis of sparsity in conjunction with time--frequency shift dictionaries addresses the following, clearly equivalent objectives.
\newcounter{Objcount}
\begin{list}{\bf Objective \arabic{Objcount}. } {\usecounter{Objcount} \setlength{\leftmargin}{0cm} \setlength{\itemindent}{1cm}  \setlength{\listparindent}{.6cm} \setlength{\itemsep}{.3cm}}%

 \item  Determine the coefficient sequence of a vector
            \begin{equation}\label{TFrep}
                    y = \sum_{\lambda \in \Z_n{\times} \Z_n} x_{\lambda} \pi(\lambda) g
            \end{equation}
            that is known to have a sparse representation in the Gabor system $\{\pi(\lambda) g: \lambda \in \Z_n{\times} \Z_n\}$ with window $g\neq 0$. Clearly, the representation (\ref{TFrep}) is redundant; given $y$ the
            coefficient vector $x$ is not unique and it is a non-trivial problem of
            computing efficiently the sparsest representation of $y$.

            If $g$ is well localized in time and frequency, then the sparse coefficient vector $x$ can be seen to describe the time--frequency content of any signal $y=\Psi_g x$ \cite{Gro01}. Note that the windows
(\ref{eq:gA}), (\ref{eq:gR})
considered in this paper are neither well localized in time nor in frequency.

        \item  Establish the applicability of $\Psi_g$ as measurement matrix for compressed sensing, that is, consider the rows of $\Psi_g$ as measurement vectors, in the classical strategy of efficiently determining a signal $x$ which is sparse in the Euclidean basis. In short, the aim is again to recover $x$ from $y=\Psi_g x$ whenever $\|x\|_0$ can be assumed small.
            
            The window vector $g$ used to achieve our main results Theorems~\ref{thm:randomphase} and \ref{thm:recover} is chosen at random \eqref{eq:gR}, that is, $\Psi_g\in\C^{n{\times}n^2}$ depends on $n$ independent random variables as 
compared to $n{\times}N$ independent 
random variables in the case of Gaussian or Bernoulli 
$n \times N$ measurement matrices \cite{cata06,badadewa06}. Note that 
our results apply also to $n\times N$ measurement matrices, $N \leq n^2$, 
that are obtained by removing $n^2-N$ columns from $\Psi_g$.

Further, the 
structure of $\Psi_g$ allows for fast Fourier transform based matrix vector
multiplication algorithms \cite{Str98} (in contrast to unstructured
Gaussian or Bernoulli random matrices). This leads to efficient implementations
of $\ell_1$-minimization methods \cite{bova04}.
            
        \item  Identify from a single input output pair $(g,\Gamma g)$ the coefficient vector $x$ of an operator
        \begin{equation}\label{def:Gamma}
                    \Gamma = \sum_{\lambda \in \Z_n{\times} \Z_n} x_\lambda \pi(\lambda),
        \end{equation}
        where $\Gamma$ is assumed to have a sparse representation in the system of time frequency shift matrices $\{\pi(\lambda) : \lambda \in \Z_n{\times} \Z_n\}$.

In short, the task at hand is to identify
$\Gamma \in \C^{n{\times} n}$, or equivalently $x$, from its action $y= \Gamma g$ on a single vector $g$.
Writing
\[
y = \Gamma g \,=\, \sum_{\lambda \in \Z_n {\times} \Z_n} x_\lambda \pi(\lambda) g
\]
with unknown but sparse $x$, we observe the equivalence of this objective  with Objectives~1 and 2.

This objective falls in the realm of what is known as channel operator estimation / identification in communications engineering, and, indeed, \eqref{def:Gamma}
is a common model of wireless channels \cite{be63,GP07,Cor01,Pae01} and sonar \cite{st99,mi87} where physical considerations often suggest that $x$ is rather sparse. First results were obtained in \cite{pfrata07}, on which we will improve here. Further, using multiple input output pairs for the efficient recovery of channel operators is discussed in \cite{pfrata07}.
Sparsity constraints in the dictionary of time--frequency shifts have also been considered for
radar applications \cite{HS07}.

\end{list}

In the following, we will phrase our results in terms of Objective~1, namely,
we assume that $y$ is given and has an unknown $\sparsity$-sparse
representation (\ref{TFrep}) in a given Gabor system \eqref{GaborSystem}
with $\sparsity < n$.

A natural strategy to recover the corresponding coefficient vector in this setup consists in
seeking the vector $x$ with minimal
support consistent with $y$; in other words solving
the $\ell_0$-minimization problem
\begin{equation}\label{def:l0}
\min_{x} \|x\|_0 \quad \mbox{subject to } \Psi_g x = y.
\end{equation}
Unfortunately, this problem is NP hard in general \cite{avdama97}, and hence,
is not feasible in practice. In order to avoid this computational
bottleneck, several alternative reconstruction methods have been
suggested as mentioned above.
We will concentrate
here on Basis Pursuit, which seeks the solution of the convex
problem
\begin{equation}\label{def:Basis Pursuit}
\min_{x} \|x\|_1 \quad \mbox{subject to } \Psi_g x = y,
\end{equation}
where $\|x\|_1 = \sum_{\lambda \in \Z_n^2} |x_{\lambda}|$ is the
$\ell_1$-norm of $x$.
This problem can be solved with efficient convex optimization techniques
\cite{bova04,chdosa99,dots06-1}. Of course, the hope is that the solution of (\ref{def:Basis Pursuit})
coincides with the solution of the
$\ell_0$-minimization problem (\ref{def:l0}).
It is the goal of this paper to make this rigorous.

So far we did not specify the window $g$ in \eqref{GaborSystem}. In \cite{pfrata07} we proposed
to work with the Alltop window $g^A$ \cite{al80,hest03} with entries
\begin{equation}\label{eq:gA}
g^A_q \,=\, \frac{1}{\sqrt{n}}e^{2\pi i q^3/n}, \quad q=0,\hdots,n{-}1,
\end{equation}
and with the randomly generated window $g^R$ with entries
\begin{equation}\label{eq:gR}
g^R_q \,=\, \frac{1}{\sqrt{n}} \epsilon_q, \quad q=0,\hdots,n{-}1,
\end{equation}
where the $\epsilon_q$ are independent and uniformly distributed
on the torus $\{z \in \CC, |z|=1\}$; in other words, $g^R$ is a
normalized Steinhaus sequence.
The Alltop window will only be used for prime $n \geq 5$. Although both 
windows seem to be a bit unfamiliar
in terms of time-frequency analysis due to their lack of time-frequency
concentration (they are actually completely unlocalized in both
time and frequency), they may perfectly be applied to the
problem of identifying a sparse operator $\Gamma$ of the
form (\ref{def:Gamma}) in Objective~3 \cite{pfrata07}.

In \cite{pfrata07}, the following theorem concerning the recovery of sparse
time-frequency representations in terms of $g^A$ and $g^R$ was shown.
\begin{theorem}\label{cor1}
\begin{itemize}
\item[(a)] Let $n\geq 5$ be prime and $g=g^A$ be the Alltop window
defined in (\ref{eq:gA}). If $\sparsity < \frac{\sqrt{n}+1}{2}$ then Basis Pursuit given in \eqref{def:l0}
recovers all $\sparsity$-sparse $x$ from $y=\Psi_{g} x$.
\item[(b)] Let $n$ be even and choose $g=g^R$ to be the random unimodular
window in (\ref{eq:gR}). Let $t > 0$ and suppose
\begin{equation}\label{cond:k}
\sparsity \leq \frac{1}{4} \sqrt{\frac{n}{2 \log n + \log 4 + t}} + \frac{1}{2}\,.
\end{equation}
 Then with probability at least
$1-e^{-t}$ Basis Pursuit \eqref{def:l0} recovers all $\sparsity$-sparse $x$ from $y= \Psi_{g}$.
\end{itemize}
\end{theorem}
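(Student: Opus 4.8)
The plan is to reduce both parts to a single deterministic coherence estimate. Recall that for $\Psi_g$ with $\ell_2$-normalized columns the coherence is $\mu = \max_{\lambda\neq\lambda'}|\langle \pi(\lambda)g,\pi(\lambda')g\rangle|$, and a classical sufficient condition for Basis Pursuit \eqref{def:Basis Pursuit} to recover every $\sparsity$-sparse $x$ from $y=\Psi_g x$ is
\begin{equation}\label{eq:cohcond}
(2\sparsity-1)\mu < 1, \qquad\text{equivalently}\qquad \sparsity < \tfrac{1}{2}\bigl(1+\mu^{-1}\bigr),
\end{equation}
see \cite{gitr05,tr04}. Since each column $\pi(\lambda)g$ is a unit vector when $\|g\|_2=1$, it suffices to estimate $\mu$ for each window and substitute into \eqref{eq:cohcond}. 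I would first record the Gram entries: writing $\lambda=(k,\ell)$, $\lambda'=(k',\ell')$ and using \eqref{eq:trans_mod},
\[
\langle \pi(\lambda)g,\pi(\lambda')g\rangle = \sum_q e^{2\pi i(\ell-\ell')q/n}\,g_{q+k}\,\overline{g_{q+k'}},
\]
so that after the substitution $a=q+k'$ the modulus depends only on $m=\ell-\ell'$ and $d=k-k'\in\Z_n$.

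For part (a) I would show $\mu=1/\sqrt n$ exactly. When $k=k'$ the entry is $\tfrac1n\sum_q e^{2\pi i mq/n}=0$ for $m\neq 0$. When $k\neq k'$, inserting $g=g^A$ turns the exponent $(q+k)^3-(q+k')^3+mq$ into a polynomial in $q$ whose quadratic coefficient is $3(k-k')$; since $n\geq 5$ is prime this coefficient is invertible in $\Z_n$, so the entry is a normalized quadratic Gauss sum of modulus exactly $\sqrt n$. Hence every off-diagonal entry has modulus $1/\sqrt n$, giving $\mu=1/\sqrt n$, and \eqref{eq:cohcond} becomes precisely $\sparsity<(\sqrt n+1)/2$.

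For part (b) the coherence is random and the task is to bound it with high probability. With $g=g^R$ the nonzero entries reduce, for $d=k-k'\neq 0$, to $\tfrac1n W_{m,d}$ with $W_{m,d}=\sum_a e^{2\pi i ma/n}\,\epsilon_{a+d}\,\overline{\epsilon_a}$; the diagonal blocks $d=0$ again vanish exactly because $\sum_q e^{2\pi i mq/n}=0$ for $m\neq 0$. The strategy is: (i) obtain an exponential tail bound $\P(|W_{m,d}|>nu)\leq C e^{-c n u^2}$ for a single entry, and (ii) take a union bound over the fewer than $n^2$ relevant pairs $(m,d)$, yielding $\P(\mu>u)\leq n^2 C e^{-c n u^2}$. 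Choosing $u$ so the right-hand side equals $e^{-t}$ produces a bound of the form $\mu\leq 2\sqrt{(2\log n+\log 4+t)/n}$ with probability at least $1-e^{-t}$, and inserting this into \eqref{eq:cohcond} gives exactly \eqref{cond:k}.

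I expect step (i) to be the main obstacle, because $W_{m,d}$ is a second-order Steinhaus chaos rather than a sum of independent terms: each variable $\epsilon_j$ occurs in two summands (indexed by $a=j$ and $a=j-d$), so Hoeffding's inequality does not apply directly. The natural remedies are a decoupling inequality replacing one copy of the sequence by an independent copy, after which conditioning linearizes the sum so that a complex Hoeffding/Azuma estimate applies, or a direct bounded-difference martingale argument revealing the $\epsilon_j$ one at a time; the evenness of $n$ enters here to organize the dependency structure of the orbits under $a\mapsto a+d$. Carefully tracking the constants $c$ and $C$ through this estimate and the $n^2$-fold union bound is what produces the precise numbers $\tfrac14$, $2$ and $\log 4$ appearing in \eqref{cond:k}.
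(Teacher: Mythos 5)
Your proposal follows essentially the same route as the paper, which presents this theorem as a corollary of the standard coherence threshold $\sparsity<\tfrac12(1+\mu^{-1})$ for Basis Pursuit combined with the exact Alltop coherence $\mu=1/\sqrt n$ (a quadratic Gauss sum, as you compute) and the probabilistic coherence bound \eqref{coherence:gR} from \cite{pfrata07}; your constants in both parts reproduce \eqref{cond:k} correctly. The one step you leave sketched --- the tail bound for the second--order Steinhaus chaos $W_{m,d}$ --- is precisely the content of \eqref{coherence:gR} (Theorem~5.1 of \cite{pfrata07}), and your proposed remedy (conditioning/decoupling to linearize the chaos and then applying the Steinhaus Hoeffding bound \eqref{Bernstein}) is the argument used there.
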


Theorem~\ref{cor1} is based on standard recovery results for Basis Pursuit
which rely on the coherence of $\Psi_g$ \cite{tr04,doelte06}. The coherence for $g=g^A$ was
given in \cite{hest03}, and the one for $g=g^R$
was estimated in \cite{pfrata07}, see (\ref{coherence:gR}).
Although Theorem~\ref{cor1} shows that recovery guarantees
can be given, the conditions on
the maximal sparsity $\sparsity$ are quite restrictive; $\sparsity$
has to be as small as
of the order of $\sqrt{n}$ or even $\sqrt{n/\log(n)}$.

Passing from a worst case
analysis to an average case analysis in the sense that the support set
of $x$ and the signs of its non-zero coefficients are chosen at random,
it is possible to apply recent results of Tropp \cite{tr06-2} to show that
recovery can be ensured with high probability provided
\begin{equation}\label{cond:k2}
\sparsity \leq C\, \frac{n} {\log(n)^{u}}
\end{equation}
for some constant $c$ where $u =1$ in the case of $g^A$ and $u =2$ in
the case of  $g^R$. For a precise formulation of these results,
see Theorem~2.5 in  \cite{pfrata07}.

In this paper we will work with the
randomly generated window $g^R$ and gradually improve conditions
(\ref{cond:k}) and (\ref{cond:k2}) to $\sparsity \leq C n/\log(n)$, while
removing the randomness assumption on the coefficients $x$.
It seems rather difficult to perform a similar task for the deterministic
Alltop window $g^A$.

The paper is organized as follows. In Section \ref{Sec:Results} we state
our two main results on recovery of sparse time-frequency representations, namely Theorems~\ref{thm:randomphase} and \ref{thm:recover}.
Section \ref{Sec:Cond} will deal with the estimation of the smallest
and largest
singular value of a submatrix of $\Psi_g$ for $g=g^R$, which plays a central role in the proofs of Theorems~\ref{thm:randomphase} and \ref{thm:recover}. In Section \ref{Sec:RPh}
we prove Theorem~\ref{thm:randomphase} on recovery of sparse
coefficients $x$ with random phases; while Section \ref{Sec:Det} contains the proof
of Theorem~\ref{thm:recover} on the recovery of deterministic sparse coefficients $x$.

Throughout the paper $\|\cdot\|_p$ denotes the usual $\ell_p$-norm
on sequences, while $\|\cdot\|_{p\to q}$ is the operator norm from $\ell_p$
to $\ell_q$, and, for brevity $\|\cdot\| = \|\cdot\|_{2\to 2}$. The Frobenius
norm of a matrix $A$ is defined as $\|A\|_F = \sqrt{\Tr(A^*A)}$, where $\Tr$ is
the trace.
Furthermore,
$\P\big(E\big)$ denotes the probability of an event $E$ and $\E$ means expectation.

\section{Statement of Results}
\label{Sec:Results}

Our results are concerned with the recovery by Basis Pursuit (\ref{def:Basis Pursuit})
of sparse time-frequency
representations (\ref{TFrep}) with the randomly generated
window $g=g^R$ given in (\ref{eq:gR}). We present a first result for
deterministic support sets, that is, for every possible support set, but random phases of the coefficient
vector $x$; and a second result, Theorem~\ref{thm:recover}, for deterministic $x$.

\begin{theorem}\label{thm:randomphase}
Let $n$ be even, and let $\Lambda \subset \Z_n \times \Z_n$ be of cardinality
$|\Lambda| = \sparsity$.
Let $x$ with $\supp(x) = \Lambda$ be such that on $\Lambda$ the random phases
$(\sgn(x_\lambda))_{\lambda \in \Lambda}$ are independent and uniformly distributed
on the torus $\{z \in \C, |z| = 1\}$. Let $\sigma > 8$. Choose the window
$g=g^R$ as in (\ref{eq:gR}), that is, with random
entries independently and uniformly distributed on the torus
$\{z \in \C, |z| = 1\}$.
Then with probability at most
\[
2(n^2-\sparsity)\exp\left(- \frac{n}{8 \sigma \sparsity \log n}\right)
+ CS \exp\left(- \frac{n}{16e \sparsity}\right)
+ 4 n^{-(\sigma/4 - 2)}
\]
Basis Pursuit (\ref{def:Basis Pursuit}) fails to recovers $x$ from $y = \Psi_g x$. Here, the constant $C \approx 1.075$.
\end{theorem}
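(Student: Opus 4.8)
The plan is to verify the standard dual-certificate (exact recovery) condition for Basis Pursuit and to exploit that the random window $g=g^R$ and the random signs $(\sgn(x_\lambda))_{\lambda\in\Lambda}$ are drawn independently. Let $\Psi_\Lambda$ denote the submatrix of $\Psi_g$ whose columns are $\pi(\lambda)g$, $\lambda\in\Lambda$, and write $\epsilon=\sgn(x_\Lambda)$. Recall \cite{tr04} that if the columns $\{\pi(\lambda)g\}_{\lambda\in\Lambda}$ are linearly independent, then $x$ is the unique minimizer of \eqref{def:Basis Pursuit} as soon as the certificate $h=\Psi_\Lambda(\Psi_\Lambda^*\Psi_\Lambda)^{-1}\epsilon$ obeys $\max_{\omega\notin\Lambda}|\langle\pi(\omega)g,h\rangle|<1$. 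Setting
\[
v_\omega=(\Psi_\Lambda^*\Psi_\Lambda)^{-1}\Psi_\Lambda^*\pi(\omega)g,
\]
a short computation gives $\langle\pi(\omega)g,h\rangle=\langle v_\omega,\epsilon\rangle$, so recovery can fail only if $|\langle v_\omega,\epsilon\rangle|\ge 1$ for some $\omega\notin\Lambda$. The entire problem thus reduces to a tail estimate, uniform over $\omega$, for these linear forms in $\epsilon$.

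First I would isolate a \emph{good window} event $\mathcal E$ on which $\Psi_\Lambda$ is well conditioned and the cross correlations are small. Using the singular-value estimates of Section~\ref{Sec:Cond} for $g^R$, I would require on $\mathcal E$ that $\|(\Psi_\Lambda^*\Psi_\Lambda)^{-1}\|\le\sqrt 2$ (a lower bound on the smallest singular value of $\Psi_\Lambda$, guaranteeing invertibility of the Gram matrix) together with the coherence bound $\max_{\lambda\neq\omega}|\langle\pi(\lambda)g,\pi(\omega)g\rangle|^2\le \sigma\log n/n$. The failure probabilities of these two sub-events, which come from the random-matrix and Steinhaus concentration arguments for $g^R$, are exactly what produce the second term $C\sparsity\exp(-n/(16e \sparsity))$ and the third term $4n^{-(\sigma/4-2)}$ of the stated bound. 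Since every $\omega\notin\Lambda$ differs from all $\lambda\in\Lambda$, on $\mathcal E$ we obtain, uniformly in $\omega$,
\[
\|v_\omega\|_2^2\le\|(\Psi_\Lambda^*\Psi_\Lambda)^{-1}\|^2\sum_{\lambda\in\Lambda}|\langle\pi(\lambda)g,\pi(\omega)g\rangle|^2\le 2\,\sparsity\,\frac{\sigma\log n}{n}.
\]

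Next, conditioning on $\mathcal E$ freezes $g$ and hence every $v_\omega$ becomes deterministic; by independence I may then treat $\epsilon$ as the only remaining source of randomness and apply a Hoeffding-type deviation inequality for Steinhaus sums, $\P(|\langle a,\epsilon\rangle|\ge u)\le 2\exp(-u^2/(4\|a\|_2^2))$. Taking $u=1$ and inserting the bound on $\|v_\omega\|_2^2$ yields $\P(|\langle v_\omega,\epsilon\rangle|\ge 1\mid\mathcal E)\le 2\exp(-n/(8\sigma \sparsity\log n))$, and a union bound over the at most $n^2-\sparsity$ indices $\omega\notin\Lambda$ gives precisely the first term $2(n^2-\sparsity)\exp(-n/(8\sigma \sparsity\log n))$. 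Adding $\P(\mathcal E^c)$, estimated by the two sub-event bounds above, produces the full failure probability.

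I expect the main obstacle to be the window-only estimates collected in $\mathcal E$: simultaneously controlling the smallest singular value of the random submatrix $\Psi_\Lambda$ and the frame coherence of $g^R$ with the sharp dependence on $\sparsity$ and $\sigma\log n$. These are exactly the delicate bounds deferred to Section~\ref{Sec:Cond}; once they are in hand, the sign-concentration and union-bound steps are routine. A secondary subtlety is keeping the two randomness sources cleanly separated, which is handled by conditioning on $g\in\mathcal E$ before invoking the Steinhaus inequality for $\epsilon$.
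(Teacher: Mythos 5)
Your proposal follows essentially the same route as the paper: the Fuchs/Tropp dual-certificate condition of Lemma~\ref{lem:Tropp}, conditioning on a good-window event consisting of the conditioning bound of Theorem~\ref{thm:opnorm} and the coherence bound \eqref{coherence:gR}, then a Steinhaus concentration inequality for the signs and a union bound over $\rho\notin\Lambda$. The only discrepancy is in the constants: requiring $\|(\Psi_\Lambda^*\Psi_\Lambda)^{-1}\|\le\sqrt 2$ corresponds to $\delta=1-1/\sqrt2$ in Theorem~\ref{thm:opnorm} and yields $C\sparsity\exp(-\delta^2 n/(4e\sparsity))$ with $\delta^2\approx 0.086$ rather than the stated $C\sparsity\exp(-n/(16e\sparsity))$; the paper instead takes $\delta=1/2$ (so $\|(\Psi_\Lambda^*\Psi_\Lambda)^{-1}\|\le 2$ and $\|v_\rho\|_2^2\le 4\sparsity\sigma\log(n)/n$) and recovers the exponent $n/(8\sigma\sparsity\log n)$ by using the Bernstein-type inequality \eqref{Bernstein} with $\kappa=1/2$ instead of your Hoeffding constant $1/4$.
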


\begin{remark} Note that the probability estimate above becomes
effective once
\[
n  \gtrsim \max\{ 16 e \sparsity \log(C \sparsity), 64\sparsity \log(n) \log(2n^2)\},
\]
or even simpler, if $\sparsity \leq C_0 \frac{n}{\log^2(n)}$ 
for appropriately chosen $C_0$.
\end{remark}

The restriction to $n$ even was made for the sake of simple exposition;
a similar result holds also for $n$ odd
(compare also to Theorem~5.1 in \cite{pfrata07}).

Recovery is also possible for
deterministic sparse coefficients.
The corresponding proof is more involved, however.

\begin{theorem}\label{thm:recover}
Assume $x$ is an arbitrary $\sparsity$-sparse coefficient vector. Choose
the random unimodular Gabor window $g=g^R$ defined in (\ref{eq:gR}), that is, with random
entries independently and uniformly distributed on the torus
$\{z \in \C, |z| = 1\}$.
Assume that
\begin{equation}\label{cond:nk}
\sparsity \leq C \frac{n}{\log(n/\varepsilon)}
\end{equation}
for some constant $C$ (see Remark~\ref{remark:constant}). Then with probability at least $1-\varepsilon$
Basis Pursuit (\ref{def:Basis Pursuit}) recovers $x$ from $y = \Psi x = \Psi_g x$.
\end{theorem}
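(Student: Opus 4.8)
The plan is to establish the standard dual-certificate (Fuchs--Tropp) sufficient condition for Basis Pursuit and then to control the certificate by exploiting the randomness of the window $g=g^R$ for a \emph{fixed} sign pattern. Writing $\psi_\mu=\pi(\mu)g$ for the columns of $\Psi_g$, $\Lambda=\supp(x)$, $P_\Lambda=\Psi_\Lambda^*\Psi_\Lambda$ for the Gram matrix of the support atoms, and $s=\sgn(x_\Lambda)$, the vector $x$ is the unique minimizer of \eqref{def:Basis Pursuit} provided that (i) $\Psi_\Lambda$ has full column rank and (ii)
\[
\max_{\mu\notin\Lambda}\bigl|\langle\psi_\mu,\Psi_\Lambda P_\Lambda^{-1}s\rangle\bigr|<1 .
\]
Both the invertibility in (i) and a quantitative bound $\|P_\Lambda-\Id\|\le\delta<1$ are furnished by the smallest/largest singular-value estimates of Section~\ref{Sec:Cond}, which hold with high probability on an event I will call $\mathcal E_1$; on $\mathcal E_1$ the Neumann series $P_\Lambda^{-1}=\sum_{j\ge0}(\Id-P_\Lambda)^{j}$ converges with $\|P_\Lambda^{-1}\|\le(1-\delta)^{-1}$.

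The crux is (ii). With $v_\mu=\Psi_\Lambda^*\psi_\mu$ the certificate value equals $\langle P_\Lambda^{-1}v_\mu,s\rangle$. The naive bound $|\langle P_\Lambda^{-1}v_\mu,s\rangle|\le\|P_\Lambda^{-1}v_\mu\|_1\le\sqrt{\sparsity}\,\|P_\Lambda^{-1}v_\mu\|_2\lesssim\sqrt{\sparsity}\,\|v_\mu\|_2$, together with $\|v_\mu\|_2\sim\sqrt{\sparsity/n}$ (reflecting the coherence scale $n^{-1/2}$ recorded in \eqref{coherence:gR}), only yields $\sparsity\lesssim\sqrt n$, i.e.\ the worst-case bound of Theorem~\ref{cor1}; the extra factor $\sqrt{\sparsity}$ coming from $\|s\|_2$ must be avoided. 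The point is that for a \emph{fixed} sign vector $s$ one should not pass to a worst case over signs at all. Computing the off-diagonal Gram entries $\langle\pi(\lambda)g,\pi(\mu)g\rangle=\tfrac1n\sum_q e^{2\pi i(\ell'-\ell)q/n}\epsilon_{q+k'}\overline{\epsilon_{q+k}}$, which have mean zero for $\mu\neq\lambda$, one expects the scalar $\langle P_\Lambda^{-1}v_\mu,s\rangle$ to concentrate about its (small) mean at scale $\sqrt{\sparsity/n}$, with the cancellation driven by the random entries of $g$ rather than by the signs.

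To make this rigorous I would, conditionally on $\mathcal E_1$, expand $P_\Lambda^{-1}$ in its Neumann series and bound the tail of each term $\langle(\Id-P_\Lambda)^{j}v_\mu,s\rangle$, which is a Steinhaus chaos of order $2(j+1)$ in the window entries $\epsilon_q$. Using decoupling and the hypercontractive moment inequalities for such chaos, each order should admit a subgaussian-type tail at scale $\delta^{\,j}\sqrt{\sparsity/n}$; summing the geometric series in $j$ then yields, for each fixed $\mu$,
\[
\P\Bigl(\bigl|\langle P_\Lambda^{-1}v_\mu,s\rangle\bigr|\ge \tfrac12\Bigr)\ \le\ 2\exp\!\bigl(-c\,n/\sparsity\bigr)
\]
on $\mathcal E_1$. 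A union bound over the at most $n^2$ indices $\mu\notin\Lambda$ then shows that (ii) holds except with probability $\le 2n^2\exp(-cn/\sparsity)$; absorbing $\P(\mathcal E_1^{\,c})$ from Section~\ref{Sec:Cond} and requiring the total to be at most $\varepsilon$ forces $cn/\sparsity\gtrsim\log(n^2/\varepsilon)$, which is exactly condition~\eqref{cond:nk} with a suitable constant $C$ (Remark~\ref{remark:constant}).

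The main obstacle is the chaos estimate in the previous step: controlling, uniformly in $\mu$ and summably over the Neumann orders $j$, the moments of the degree-$2(j+1)$ polynomials $\langle(\Id-P_\Lambda)^{j}v_\mu,s\rangle$ in the Steinhaus variables, while the deterministic sign $s$ contributes no averaging of its own. This is where decoupling and hypercontractivity, applied on the conditioning event that keeps $\|P_\Lambda-\Id\|\le\delta$, must carry the argument, and it is where the logarithmic factor in \eqref{cond:nk} enters --- through both the union bound over $\mu$ and the dependence of the chaos tails on the moment order. Everything else, namely the reduction to (i)--(ii) and the conditioning on $\mathcal E_1$, is routine given Section~\ref{Sec:Cond}.
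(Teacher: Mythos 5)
Your reduction to the Fuchs--Tropp condition and your identification of the key difficulty --- high-order Steinhaus chaos bounds for the certificate, with the cancellation driven by $g$ rather than by the signs --- match the paper's strategy (Lemma~\ref{lem:Tropp} together with Lemmas~\ref{lem:aux}, \ref{lem:frob} and \ref{lem:expaux}). However, two steps in your plan would fail as stated. First, you propose to run the chaos estimates \emph{conditionally} on the event $\mathcal{E}_1=\{\|P_\Lambda-\Id\|\le\delta\}$; but $\mathcal{E}_1$ is an event in the same Steinhaus variables $\epsilon_q$, and conditioning on it destroys the product structure that decoupling and hypercontractivity require. The paper avoids conditioning altogether: in Lemma~\ref{lem:aux} the failure event is covered by a \emph{union} of the bad events $\{\|H^m\|_F\ge\kappa\}$ and $\Omega_\rho$, each of whose probabilities is bounded by an unconditional moment estimate.

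Second, and more seriously, the term-by-term treatment of the full infinite Neumann series cannot deliver the claimed bound $2\exp(-cn/\sparsity)$. The $j$-th term $\langle(\Id-P_\Lambda)^j v_\mu,s\rangle$ is a chaos of degree $2(j+1)$ with $L^2$-scale $\sigma_j\approx\delta^j\sqrt{\sparsity/n}$; to make the deviations summable you must ask for a deviation $\beta_j$ with $\sum_j\beta_j<1$, say $\beta_j=\beta^j$ with $\delta<\beta<1$. Hypercontractive tails for a degree-$d$ chaos have the form $\exp(-c(u/\sigma)^{2/d})$, so the $j$-th failure probability is at best $\exp\bigl(-c\,((\beta/\delta)^j\sqrt{n/\sparsity})^{1/(j+1)}\bigr)$, which tends to a \emph{constant} as $j\to\infty$; the sum of the failure probabilities over $j$ does not become small, so neither the geometric summation you describe nor any choice of $\beta_j$ rescues the argument. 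The missing idea is the truncation device of Lemma~\ref{lem:aux}: write $(I_\Lambda+H)^{-1}=(I_\Lambda+A_m)\sum_{t=0}^{m-1}(-H)^t$ with $A_m=\sum_{r\ge1}(-H)^{rm}$, control the remainder $A_m$ deterministically through the single event $\{\|H^m\|_F\le\kappa\}$ (whose probability is handled by the unconditional estimate of $\E[\|H^m\|_F^2]$ from Lemma~\ref{lem:frob}), and bound only the \emph{finite} family of chaoses $((KR_\Lambda)^t\sgn(x))_\rho$, $t=1,\dots,m$, taking the moment order $L_t\approx m/t$ so that every chaos that appears has total degree comparable to $2m\asymp n/\sparsity$. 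That degree balancing, carried out combinatorially via the associated Stirling numbers in Lemma~\ref{lem:expaux} rather than by generic hypercontractivity, is what produces the $n/\log(n/\varepsilon)$ sparsity threshold.
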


\begin{remark}\label{remark:constant}
From the proof of Theorem~\ref{thm:recover} one can deduce  information about
the constant in (\ref{cond:nk}). Indeed recovery is ensured provided
\[
n \geq \max \{C_1 \sparsity \log(n^2/\varepsilon), C_2 \sparsity (\log(\sparsity^4/\varepsilon) + C_3) \}
\]
with $C_1 = 273.5$, $C_2 = 64.1$ and $C_3 = 8.35$. Hence, the constant
of Theorem~\ref{thm:randomphase} is better than those in Theorem~\ref{thm:recover},
but this improvement comes at the cost of a worse exponent at the logarithm 
and of assuming random phases $\sgn(x_\lambda)$.
\end{remark}

Numerical experiments illustrating our recovery results were already
given in \cite{pfrata07}; clearly, they can only indicate the average
case behaviour rather than the worst case behaviour covered in
Theorem~\ref{thm:recover}. These experiments suggest that most $\sparsity$-sparse
signals can be recovered provided $\sparsity \leq \frac{n}{2\log(n)}$. So
Theorem \ref{thm:recover} seems to indicate
the right asymptotic order $n/ \log(n)$, but the constants are likely not
optimal.

We note once more that both theorems can be interpreted as compressed sensing
type results on recovery of $\sparsity$-sparse vectors in $\C^{n^2}$ from
$n$ measurements with $\Psi_g \in \CC^{n{\times} n^2}$ playing the role
of the (random) measurement matrix as described in Objective~2. Also, both results can be applied to identify matrices which have a sparse representation in the basis of time--frequency shift matrices as described in Objective~3.

Furthermore, Theorems~\ref{thm:randomphase} and \ref{thm:recover}
hold literally (including their proofs) when we pass from $\Z_n$ to time-frequency analysis on an arbitrary finite Abelian group;
in particular, on multi-dimensional versions $\Z_n^d$ with $d\geq 1$ where $n$ would be replaced by $n^d$ in all of the statements.

\section{Well conditioned  submatrices of Gabor synthesis matrices}
\label{Sec:Cond}
It is crucial for sparse recovery that small column submatrices of measurement or synthesis matrices such as $\Psi_g$
are well-conditioned. Before proceeding to the proofs of our main Theorems
\ref{thm:randomphase} and \ref{thm:recover}
we will deal with such an analysis in this section.

Throughout the rest of the paper we let $\Psi=\Psi_g \in \CC^{n{\times} n^2}$
with $g=g^R$ being the randomly generated unimodular window described in (\ref{eq:gR}).
For $\Lambda\subseteq \Z_n{\times}\Z_n$ and $A\in \CC^{n{\times} n^2}$ we denote by
$A_\Lambda$ the matrix consisting only of those columns indexed
by $\lambda\in\Lambda$.

\begin{theorem}\label{thm:opnorm} Let $\varepsilon, \delta \in (0,1)$ and
$|\Lambda|=\sparsity$. Suppose that
\begin{equation}\label{cond1}
\sparsity \leq \frac{\delta^2 n}{4e(\log(\sparsity/\varepsilon) + c)}
\end{equation}
with $c = \log(e^2/(4(e{-}1))) \approx 0.0724$. Then $\|I_\Lambda -  \Psi_\Lambda^* \Psi_\Lambda\| \leq \delta$
with probability at least $1-\varepsilon$; in other words the minimal and maximal
eigenvalues
of $\Psi_\Lambda^* \Psi_\Lambda$ satisfy
$
1-\delta \leq \lambda_{\min} \leq \lambda_{\max} \leq 1+\delta
$
with probability at least $1-\varepsilon$.
\end{theorem}

\begin{remark} Assuming equality in condition (\ref{cond1}) and solving for $\varepsilon$ we deduce
\begin{equation}\label{ineq:op2}
\P\big( \|I_\Lambda - \Psi_\Lambda^* \Psi_\Lambda\| > \delta)
\leq \frac{e^2}{4(e{-}1)} \sparsity \exp\left(-\frac{\delta^2 n}{4e \sparsity}\right)
= C \sparsity \exp\left(-\frac{\delta^2 n}{4e \sparsity}\right)
\end{equation}
with $C \approx 1.075$.
\end{remark}

In the following we will develop the proof of Theorem~\ref{thm:opnorm}.

\subsection{Expectation of a Frobenius norms}

We set $H=\Psi_\Lambda^\ast\Psi_\Lambda - I_\Lambda$.
An important step towards Theorem~\ref{thm:opnorm} is to estimate
the expectation of the Frobenius norm 
of powers of $H$. Indeed having accomplished this task one may
use Markov's inequality, the fact that the Frobenius
norm majorizes the operator norm, and the fact that $H$ is self-adjoint to obtain
\begin{align}
        \P\big(\|I_\Lambda-\Psi_\Lambda^\ast\Psi_\Lambda\|> \delta\big)
            \, &=\, \P\big(\|H\|> \delta\big)
                =\P\big(\|H\|^{2m} > \delta^{2m}\big)
\leq  \delta^{-2m} \E[ \|H\|^{2m} ]\notag\\[.3cm]
               &  = \delta^{-2m} \E[ \|H^{m}\|^2 ]
                \leq \delta^{-2m} \E[ \|H^m\|_F^{2} ]
            = \delta^{-2m} \E[ {\Tr} H^{2m} ]. \label{Pineq1}
\end{align}
We will use the following concept to estimate $\E[\Tr (H^{2m})]$.
\begin{definition}
 The associated Stirling number of the first kind, denoted by
$d_2(m,s)$, is the number of permutations of $m$ elements which involve exactly $s$
disjoint cycles and where each cycle has at least 2 elements.
\end{definition}

The associated Stirling numbers satisfy the following recursion
\cite[p.\ 75]{ri58}
\begin{equation}\label{Stirling:rec}
d_2(m{+}1,s)=m[d_2(m,s)+d_2(m{-}1,s{-}1)], \quad 
1 \leq s \leq m/2\, ,
\end{equation}
with boundary conditions
\begin{equation}\label{bound:cond}
d_2(0,0) = 1, \quad d_2(m,0)=0, \quad d_2(m,s)=0,\quad m \geq 1, s > m/2.
\end{equation}
Equipped with this tool, the desired expectation of the Frobenius
norm in \eqref{Pineq1}  can be estimated as follows.

\begin{lemma}\label{lem:frob} If $\sparsity = |\Lambda|$ and $m$ even then
\begin{equation}\label{equation:LemmaCount1}
\E[\Tr H^{m}] \leq \sparsity \left(\frac{\sparsity} n \right)^{m} \ \ \sum_{s=1}^{m/2} d_2(m,s) \,\left(\frac n {\sparsity} \right)^s.
\end{equation}
\end{lemma}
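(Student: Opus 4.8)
The plan is to compute $\E[\Tr H^m]$ by expanding the trace as a sum over index tuples, identifying each term as the expectation of a product of entries of $g^R$, and then using the independence and uniform-phase distribution of the random window entries to kill most terms. The surviving terms will be counted by the associated Stirling numbers $d_2(m,s)$, which is exactly what appears in the claimed bound.

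I would begin by writing $H = \Psi_\Lambda^*\Psi_\Lambda - I_\Lambda$ entrywise. The $(\lambda,\mu)$ entry of $\Psi_\Lambda^*\Psi_\Lambda$ is $\langle \pi(\mu)g, \pi(\lambda)g\rangle$, an inner product of two time-frequency shifts of the random unimodular window; writing this out via \eqref{eq:trans_mod} and \eqref{eq:gR} produces a sum over $q \in \Z_n$ of terms $\tfrac1n \overline{g_q^R} g_q^R$ dressed with modulation factors $e^{2\pi i(\cdot)/n}$, where the random content is a product $\overline{\epsilon_{a}}\epsilon_{b}$ of two window entries. Subtracting $I_\Lambda$ removes the diagonal, so the off-diagonal entries $H_{\lambda\mu}$ carry the randomness. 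Next I would expand
\[
\E[\Tr H^m] = \sum_{\lambda_1,\dots,\lambda_m} \E\big[H_{\lambda_1\lambda_2} H_{\lambda_2\lambda_3}\cdots H_{\lambda_m\lambda_1}\big],
\]
and each $H_{\lambda_i\lambda_{i+1}}$ itself unfolds into a sum over a frequency-shift variable $q_i$. The upshot is a large sum whose generic term is $n^{-m}$ times a pure phase times $\E$ of a monomial in the $\epsilon_q$'s and their conjugates.

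The decisive computation is the expectation of such a monomial. Because the $\epsilon_q$ are independent and uniformly distributed on the unit circle, $\E[\epsilon_q^a \overline{\epsilon_q}^{\,b}] = \delta_{a,b}$, so a monomial has nonzero expectation (equal to $1$) exactly when, for each value of $q$, the number of times $\epsilon_q$ appears un-conjugated equals the number of times it appears conjugated. In the cyclic product each factor $H_{\lambda_i \lambda_{i+1}}$ contributes one conjugated and one un-conjugated window entry at indices determined by $\lambda_i,\lambda_{i+1}$ and the internal variable; the balance condition then forces a pairing/permutation structure on the index tuple. The key point — and \textbf{the step I expect to be the main obstacle} — is to show that after resolving the phase sums the count of contributing configurations with exactly $s$ ``cycles'' is governed by the associated Stirling number $d_2(m,s)$ (each cycle having length at least $2$ because the diagonal was removed, i.e.\ no fixed points survive), and that each such configuration contributes a factor scaling like $\sparsity^{s}$ in the index choices over $\Lambda$ against the overall $n^{-m}$ and the $\sparsity^{m}$ worth of free index choices. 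Concretely, I would argue that a contributing term corresponds to grouping the $m$ factors into $s$ disjoint cycles, where within each cycle the internal phase summation over the $q_i$'s collapses to a factor of $n$ (giving $n^s$), while the cardinality constraint $\lambda_i \in \Lambda$ yields at most $\sparsity^{s}$ choices, and any leftover phase factors are bounded by $1$ in absolute value.

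Assembling these pieces gives $\E[\Tr H^m] \le \sum_{s} d_2(m,s)\, n^{-m} \cdot n^{s}\cdot \sparsity^{\,?}$, and I would reconcile the exponents to land on the stated form $\sparsity (\sparsity/n)^m \sum_{s=1}^{m/2} d_2(m,s)(n/\sparsity)^s$; the overall factor $\sparsity$ accounts for the starting index of the trace cycle and the $(\sparsity/n)^m$ for the normalization inherited from the $m$ copies of $H$. I would take care to verify the range $1 \le s \le m/2$ matches the no-fixed-point constraint (a cycle needs at least two elements, so at most $m/2$ cycles) and that using $m$ even is exactly what lets every admissible pairing close up. Throughout, absolute values and the triangle inequality convert the signed/phase-weighted exact count into the claimed upper bound, so no cancellation among surviving terms is needed — only the vanishing of the unbalanced ones.
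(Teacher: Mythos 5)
Your overall strategy --- expanding $\Tr H^m$ over cyclic index tuples, unfolding each entry of $H$ into a sum over an internal variable, and using $\E[\epsilon_q^a\overline{\epsilon_q}^{\,b}]=\delta_{a,b}$ to reduce to balanced monomials indexed by cycle decompositions --- is the same as the paper's, but the decisive counting step is wrong, and you flag this yourself by leaving the exponent of $\sparsity$ as ``?''. The correct contribution for a decomposition into $s$ cycles is $d_2(m,s)\,n^{s-m}\,\sparsity^{m-(s-1)}$, not $d_2(m,s)\,n^{s-m}\,\sparsity^{s}$: the balance condition constrains the $j$'s to an $s$-parameter family (giving the factor $n^{s}$), but it does \emph{not} leave only $\sparsity^{s}$ choices of $(\lambda_1,\dots,\lambda_m)$; it imposes $s$ linear equations on the $k$-components only, and --- this is the point you explicitly deny --- the phase sums over the $s$ free $j$-parameters cancel the surviving nonzero-expectation terms unless $s$ further linear equations on the $\ell$-components hold. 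Your claim that ``no cancellation among surviving terms is needed'' is therefore false: without that cancellation you constrain only the $k$-parts of the $\lambda$'s, which for an adversarial $\Lambda$ does not control the number of admissible tuples well enough to reach \eqref{equation:LemmaCount1}. With both sets of equations one obtains $s$ linear relations among the full $\lambda_\alpha=(k_\alpha,\ell_\alpha)$, of which any $s-1$ are linearly independent (each $\lambda_\alpha$ occurs on exactly one left-hand and one right-hand side, so the only vanishing combination is the sum of all $s$ equations), whence at most $\sparsity^{m-(s-1)}$ contributing tuples per cycle decomposition and the stated bound.

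A second, smaller gap: you justify ``each cycle has at least two elements'' by saying the diagonal of $H$ was removed. Removing the diagonal only gives $\lambda_\alpha\neq\lambda_{\alpha+1}$, whereas a fixed point of the pairing permutation forces $k_\alpha=k_{\alpha+1}$, which is still compatible with $\lambda_\alpha\neq\lambda_{\alpha+1}$ (take $\ell_\alpha\neq\ell_{\alpha+1}$). The paper excludes this case by a separate argument: for unimodular $g$ the vectors $M_\ell T_k g$ with fixed $k$ are mutually orthogonal, so $H_{\lambda_\alpha\lambda_{\alpha+1}}=0$ whenever $k_\alpha=k_{\alpha+1}$, and only then is the restriction to cycles of length at least $2$ --- hence to $d_2(m,s)$ with $s\le m/2$ --- legitimate.
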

\begin{proof} Note that for $\lambda_j\in\Lambda$, we have
\begin{eqnarray*}
    H_{\lambda_1,\lambda_2}&=& \left\{
                               \begin{array}{ll}
                                  \langle \pi(\lambda_1)g,\pi(\lambda_2)g \rangle, & \hbox{if } \lambda_1\neq\lambda_2 ,\\
                                 0, & \hbox{if } \lambda_1=\lambda_2,
                               \end{array}
                             \right. \\
 H^2_{\lambda_1,\lambda_3}&=&\sum_{\lambda_2} H_{\lambda_1,\lambda_2}H_{\lambda_2,\lambda_3}
                =\sum_{\lambda_2\neq \lambda_1,\lambda_3}  \langle \pi(\lambda_1)g,\pi(\lambda_2)g \rangle \,\langle \pi(\lambda_2)g,\pi(\lambda_3)g \rangle\,,
   \\
   H^3_{\lambda_1,\lambda_4}&=& \sum_{\lambda_3} H_{\lambda_1,\lambda_3}^2 H_{\lambda_3,\lambda_4}
                  =\sum_{\lambda_3\neq \lambda_4} \sum_{\lambda_2\neq \lambda_1,\lambda_3}  \langle \pi(\lambda_1)g,\pi(\lambda_2)g \rangle\, \langle \pi(\lambda_2)g,\pi(\lambda_3)g \rangle
  \, \langle \pi(\lambda_3)g,\pi(\lambda_4)g \rangle\,,
\end{eqnarray*}
and, in general,
$$
 H^{m}_{\lambda_1,\lambda_{m{+}1}}
                =\sum_{\lambda_2\neq \lambda_1,\lambda_3} \sum_{\lambda_3\neq \lambda_4}
  \cdots \sum_{\lambda_m\neq \lambda_{m{+}1}} \langle \pi(\lambda_1)g,\pi(\lambda_2)g \rangle\, \langle \pi(\lambda_2)g,\pi(\lambda_3)g \rangle
\cdots \langle \pi(\lambda_m)g,\pi(\lambda_{m{+}1})g \rangle\,.
$$
Consequently,
\begin{eqnarray*}
  \Tr H^m
                &=&\sum_{\lambda_1} \sum_{\lambda_2\neq \lambda_1,\lambda_3} \sum_{\lambda_3\neq \lambda_4}
  \cdots \sum_{\lambda_m\neq \lambda_{1}} \langle \pi(\lambda_1)g,\pi(\lambda_2)g \rangle\, \langle \pi(\lambda_2)g,\pi(\lambda_3)g \rangle
\cdots \langle \pi(\lambda_m)g,\pi(\lambda_{1})g \rangle \\
&=&
\sum_{\substack{\lambda_1,\ldots,\lambda_m\in\Lambda
\\ \lambda_1\neq\lambda_2\neq \lambda_3\neq\cdots\neq \lambda_m\neq\lambda_1}}
\langle \pi(\lambda_1)g,\pi(\lambda_2)g \rangle\, \langle \pi(\lambda_2)g,\pi(\lambda_3)g \rangle
\cdots \langle \pi(\lambda_m)g,\pi(\lambda_{1})g \rangle\,.
\end{eqnarray*}
Linearity of $\E$  implies that $\E[ {\rm Tr} H^{m} ]=\sum_{\lambda_1\neq\lambda_2\neq \lambda_3\neq\cdots\neq \lambda_m\neq\lambda_1}  E_{\lambda_1,\ldots,\lambda_m}$ where
\begin{eqnarray}
E_{\lambda_1,\ldots,\lambda_m}=
\E\left[\langle \pi(\lambda_1)g,\pi(\lambda_2)g \rangle\, \langle \pi(\lambda_2)g,\pi(\lambda_3)g \rangle
\cdots \langle \pi(\lambda_m)g,\pi(\lambda_{1})g \rangle
    \right] \,.\label{equation:expectedvalueinnerproducts}
\end{eqnarray}
We denote $\lambda_\alpha = (k_\alpha,\ell_\alpha)$ with
$k_\alpha, \ell_\alpha \in \Z_n$, $\alpha = 1,\hdots,n$.
Applying once more linearity of $\E$ to the inner products in
(\ref{equation:expectedvalueinnerproducts})
we obtain
\begin{eqnarray}
&& \hspace{-1.2cm}E_{\lambda_1,\ldots,\lambda_m}= \notag \\
    &&\hspace{-1cm}\sum_{j_1} \sum_{j_2} \ldots \sum_{j_{m}}
e^{2\pi i j_1(\ell_1{-}\ell_2)/n} e^{2\pi i j_2(\ell_2{-}\ell_3)/n}
    \ldots e^{2\pi i j_{m}(\ell_{m}{-}\ell_{1})/n}\,  \notag
 \notag \\
&& \hspace{-1cm}\newtimes \E\left[
        g(j_1{-}k_1) \overline{g(j_1{-}k_2)} \,
        g(j_2{-}k_2) \overline{g(j_2{-}k_3)}
        \ldots
         g(j_{m{-}1}{-}k_{m{-}1})  \overline{g(j_{m{-}1}{-}k_{m})}\,
         g(j_{m}{-}k_m)  \overline{g(j_{m}{-}k_{1})}
    \right]. \label{equation:sumsexpectedvaluesummands}
\end{eqnarray}
Here and throughout the remainder of the paper, addition and subtraction of
indices $j_1-k_1$ etc.\ is understood modulo $n$.

The independence of the $g(j)$ implies that the summands in
\eqref{equation:sumsexpectedvaluesummands} factor into a product of
expectations
over powers of $g(j)$'s, namely, into factors of the form
$\E\left[  g(j)^{u_j} \overline{g(j)^{v_j}} \right]$, $u_j,v_j\in \N$.
As $\E[g(j)]=0$ and, by unimodularity of $\sqrt{n}\, g$, $\E[g(j)\overline{g(j)}]=\tfrac 1 n$, we have $\E\left[  g(j)^{u_j} \overline{g(j)^{v_j}} \right]=0$ if $u_j\neq v_j$  and $\E\left[  g(j)^{u_j} \overline{g(j)^{u_j}} \right]=n^{-u_j}$ for $j=1,\ldots,n$. We conclude that a summand appearing in
(\ref{equation:sumsexpectedvaluesummands}) equals  $0$ unless $u_j= v_j$ for
all $j=1,\ldots,n$. In other words, we have to consider only those cases where indices
$j_\alpha - k_\alpha$ and $j_{\alpha'} - k_{\alpha'+1}$ in
(\ref{equation:sumsexpectedvaluesummands}) coincide for some $\alpha,\alpha'\in\{1,\ldots,m\}$.

Combining (\ref{equation:expectedvalueinnerproducts}) and (\ref{equation:sumsexpectedvaluesummands}) we obtain
\begin{eqnarray}
  \E[{\rm Tr}H^{m}]&=&\hspace{-1cm}\sum_{\substack{\lambda_1,\ldots,\lambda_m\in\Lambda
\\ \lambda_1\neq\lambda_2\neq \lambda_3\neq\cdots\neq \lambda_m\neq\lambda_1}}
\sum_{j_1,j_2,\ldots , j_{m}=1}^n\ \ \prod_{\alpha=1}^n
    e^{2\pi i j_\alpha(\ell_{\alpha}{-}\ell_{\alpha+1})/n}  \cdot \E\left[ \prod_{\alpha=1}^n
        g(j_\alpha{-}k_\alpha) \overline{g(j_\alpha{-}k_{\alpha+1})}
    \right]. \label{equation:V2sumsexpectedvaluesummands}
\end{eqnarray}
So it remains to estimate how many of the
$|\Lambda|(|\Lambda|{-}1)^{m{-}1}(|\Lambda|{-}2)\cdot n^m$ possible combinations of
indices $\lambda_1,\ldots,\lambda_m$, $j_1,\ldots,j_m$ contribute to
(\ref{equation:V2sumsexpectedvaluesummands}) while taking into consideration that
the exponential factors in (\ref{equation:V2sumsexpectedvaluesummands}) may lead to cancelations
of nonzero summands as well.

For the sake of simple illustration we start with an example.
For given $\lambda_1,\ldots,\lambda_m$  there could exist $m$-tuples
$(j_1,\ldots,j_m)$ with
\begin{eqnarray}
    j_1 {-} k_1 &=& j_2{-}k_3 \label{equation:example-first}\\
    j_2 {-} k_2 &=& j_1{-}k_2  \label{equation:example-second}\\[.3cm]
    j_3 {-} k_3 &=& j_4{-}k_5 \\
    j_4 {-} k_4 &=& j_5{-}k_6 \\
        & \vdots & \notag \\
    j_{m{-}1} {-} k_{m{-}1}&=&j_m{-}k_1 \\
    j_m {-} k_m&=&j_3{-}k_4\,. \label{equation:example-last}
\end{eqnarray}
This scenario yields
$$
g(j_1{-}k_1)\overline{g(j_2{-}k_3)}=\frac 1 n, \ g(j_2{-}k_2)\overline{g(j_1{-}k_2)}=\frac 1 n,
\ \ldots \ ,\, g(j_m{-}k_m)\overline{g(j_3{-}k_4)}=\frac 1 n
$$
and
\begin{align}
 & \hspace{-.5cm}\E\left[
        g(j_1{-}k_1) \overline{g(j_1{-}k_2)} \,
        g(j_2{-}k_2) \overline{g(j_2{-}k_3)}
\ldots
g(j_{m{-}1}{-}k_{m{-}1}) \overline{g(j_{m{-}1}{-}k_{m})}\,
         g(j_{m}{-}k_m) \overline{g(j_{m}{-}k_{1})}
    \right]\notag\\ 
&=\,n^{-m}.\label{equation:example1}
\end{align}
Adding equations
(\ref{equation:example-first}) and (\ref{equation:example-second}) above shows that
this case, denoted in short by
\begin{eqnarray}
  1 \rightarrow 2 \rightarrow 1,
            \qquad
         3 \rightarrow  4 \rightarrow  5 \rightarrow \ldots \rightarrow  {m{-}1} \rightarrow  m \rightarrow  3, \label{equation:cyclesexample}
\end{eqnarray}
is only possible if $k_1=k_3$. Further, if this was the case, then we observe that there exists for each  $j_1=1,\ldots,n$ and $j_3=1,\ldots,n$ exactly one choice of $(m{-}2)$-tuple $(j_2,j_4,j_5,\ldots,j_m)$ satisfying
equations \eqref{equation:example-first}---\eqref{equation:example-last}, thereby implying that \eqref{equation:example1} holds.
But even these $n^2$ nonzero summands might cancel due to the phase factors present in
\eqref{equation:sumsexpectedvaluesummands}, respectively \eqref{equation:V2sumsexpectedvaluesummands}. In fact, assuming that $k_1=k_3$ holds and
that the $(m{-}2)$-tuple $(j_2,j_4,j_5,\ldots,j_m)$ is chosen to
satisfy  \eqref{equation:example-first}--\eqref{equation:example-last},
then \eqref{equation:sumsexpectedvaluesummands} becomes
\begin{eqnarray*}
  E_{\lambda_1,\ldots,\lambda_m}&=& n^{-m}
    \sum_{j_1} \sum_{j_3}
        e^{2\pi i j_1(\ell_1{-}\ell_2)/n} e^{2\pi i (j_1-k_1+k_3)(\ell_2{-}\ell_3)/n} \\[-.5cm]
        && \qquad\qquad \qquad \newtimes \ e^{2\pi i j_3(\ell_3{-}\ell_4)/n}e^{2\pi i (j_3-k_3+k_5)(\ell_4{-}\ell_5)/n}
        \cdots e^{2\pi i (j_3-k_4+k_m)(\ell_m{-}\ell_1)/n} \\[.2cm]
        &=& c_{\lambda_1,\hdots,\lambda_m} n^{-m}\Big( \sum_{j_1}
        e^{2\pi i j_1(\ell_1{-}\ell_2{+}\ell_2{-}\ell_3)/n}\Big)\Big( \sum_{j_3} e^{2\pi i j_3(\ell_3{-}\ell_4{+}\ell_4{-}\ell_5{+}\cdots {+} \ell_m{-}\ell_1)/n}\Big)\\
        &=&  c_{\lambda_1,\hdots,\lambda_m} n^{-m}\Big( \sum_{j_1}
        e^{2\pi i j_1(\ell_1{-}\ell_3)/n}\Big)\Big( \sum_{j_3} e^{2\pi i j_3(\ell_3{-}\ell_1)/n}\Big)\,,
\end{eqnarray*}
where $|c_{\lambda_1,\hdots,\lambda_m}| = 1$.
Recalling that $\sum_{j=1}^{n} e^{2\pi i j \ell/n}=0$ whenever $\ell\neq 0$, we see that the contributions in (\ref{equation:example1}) cancel out unless
$\ell_1=\ell_3$.
In short, $E_{\lambda_1,\ldots,\lambda_m}$ only contributes if $\ell_1=\ell_3$
in addition to $k_1=k_3$. 
We conclude that $|E_{\lambda_1,\hdots,\lambda_m}| = n^{-m} n^2$ if
$\lambda_1 = \lambda_3$ and $E_{\lambda_1,\hdots,\lambda_m} = 0$ otherwise.

We will now generalize the consideration of the above example in order
to derive the general estimate \eqref{equation:LemmaCount1}.

\noindent {\it Step 1.} Fix $\lambda_1,\ldots, \lambda_m$.
For $E_{\lambda_1,\ldots, \lambda_m}$ in \eqref{equation:sumsexpectedvaluesummands} to be nonzero, we must have that
   \begin{eqnarray}
  \E\left[
        g(j_1{-}k_1) \overline{g(j_1{-}k_2)} \,
        g(j_2{-}k_2) \overline{g(j_2{-}k_3)}
        \ldots
         g(j_{m}{-}k_m)  \overline{g(j_{m}{-}k_{1})}
    \right] \neq 0\label{equation:ExpNotZero}
\end{eqnarray}
for some $j_1,\ldots,j_m$.
We observed earlier, that this is only possible if each $g(j)$
in \eqref{equation:ExpNotZero}  can be paired with some $\overline{g(j)}$,
so that $\E[g(j)\overline{g(j)}]=\E[|g(j)|^2]=\tfrac 1 n$ becomes effective.
For this to be the case, the indices $1,\ldots,m$ must decompose into $s$ cycles
\begin{equation}\label{def:cycle}
         {\alpha_{11}} \rightarrow   {\alpha_{12}}\rightarrow \ldots  \rightarrow {\alpha_{1r_1}}\rightarrow   {\alpha_{11} },
            \quad \ldots \quad,\,  {\alpha_{s1}}   \rightarrow {\alpha_{s2}}\rightarrow \ldots  \rightarrow  {\alpha_{sr_s}}\rightarrow  {\alpha_{s1}} ,
\end{equation}
$r_1+r_2+\ldots+r_s=m$, where, similarly to \eqref{equation:example-first}--\eqref{equation:example-last},
\begin{eqnarray}
     j_{\alpha_{q1}} {-} k_{\alpha_{q1}} &=& j_{\alpha_{q2}}{-}k_{\alpha_{q2}+1}  \notag \\
     j_{\alpha_{q2}} {-} k_{\alpha_{q2}} &=& j_{\alpha_{q3}}{-}k_{\alpha_{q3}+1}  \notag \\[-.3cm]
                & \vdots & \label{equation:permutationrules} \\[-.3cm]
     j_{\alpha_{q(r_q{-}1)}} {-} k_{\alpha_{q(r_q{-}1)}} &=& j_{\alpha_{qr_q}}{-}k_{\alpha_{qr_q}+1}  \notag \\
     j_{\alpha_{qr_q}} {-} k_{\alpha_{qr_q}} &=& j_{\alpha_{q1}}{-}k_{\alpha_{q1}+1}  \notag
\end{eqnarray}
holds for $q=1,\ldots,s$.
Further, \eqref{equation:permutationrules} implies that whenever the $s$ equations
\begin{eqnarray}
     k_{\alpha_{q1}}+ k_{\alpha_{q2}}+\ldots +k_{\alpha_{q(r_q{-}1)}}+k_{\alpha_{qr_q}}
    &=&k_{\alpha_{q2}+1}+k_{\alpha_{q3}+1} +\ldots+k_{\alpha_{qr_q}+1}+ k_{\alpha_{q1}+1}, 
\label{equation:equationsForK}
\end{eqnarray}
$q=1,\ldots,s$,
are satisfied, then any $s$-tuple $(j_{11},j_{21},\ldots,j_{s1})\in\Z_n^{s}$ defines a nonzero value for
\begin{eqnarray}
  \E\left[
        g(j_1{-}k_1) \overline{g(j_1{-}k_2)} \,
        g(j_2{-}k_2) \overline{g(j_2{-}k_3)}
        \ldots
         g(j_{m}{-}k_m)  \overline{g(j_{m}{-}k_{1})}
    \right] \neq 0. \label{equation:EproductNEQ0}
\end{eqnarray}
Still, as we saw earlier, the contributions of summands of the
form \eqref{equation:EproductNEQ0} may cancel each other due to the phase factors
in (\ref{equation:expectedvalueinnerproducts}). In fact,
for $j_{\alpha_{qp}}$, $q=1,\ldots,s$ and $p=1,\ldots,r_q$ satisfying
\eqref{equation:permutationrules}, we have
\begin{eqnarray*}
  E_{\lambda_1,\ldots,\lambda_m}&=&\sum_{j_1,\ldots,j_m=1}^n \E\left[\prod_{\alpha=1}^m
            e^{2\pi i j_{\alpha}\ell_{\alpha}}  e^{-2\pi i j_{\alpha}\ell_{\alpha{+}1}}
g(j_\alpha{-}k_\alpha)\, \overline{g(j_\alpha{-}k_{\alpha{+}1})}\right]\\
&=&\sum_{j_{11},j_{21},\ldots,j_{s1}=1}^n\quad
  n^{-m}  \prod_{q=1}^s \prod_{p=1}^{r_q}
            e^{2\pi i \big(j_{\alpha_{qp}}\ell_{\alpha_{qp}}- j_{\alpha_{qp}}\ell_{\alpha_{qp}{+}1}\big)}  \\
&=&\sum_{j_{11},j_{21},\ldots,j_{s1}=1}^n
   n^{-m} \prod_{q=1}^s  \prod_{p=1}^{r_q}
            e^{2\pi i j_{\alpha_{qp}}\big(\ell_{\alpha_{qp}}- \ell_{\alpha_{qp}{+}1}\big)}\\
&=&\sum_{j_{11},j_{21},\ldots,j_{s1}=1}^n
  n^{-m}   \prod_{q=1}^s \prod_{p=1}^{r_q}
            e^{2\pi i \big(j_{\alpha_{q1}}+\widetilde{k}_{\alpha_{qp}}\big)\big(\ell_{\alpha_{qp}}- \ell_{\alpha_{qp}{+}1}\big)} \\
&=&n^{-m} e^{2\pi i  \sum_{q=1}^s\sum_{p=1}^{r_q}\widetilde{k}_{\alpha_{qp}}\big(\ell_{\alpha_{qp}}- \ell_{\alpha_{qp}{+}1}\big)}\sum_{j_{11},j_{21},\ldots,j_{s1}=1}^n\quad
    \prod_{q=1}^s e^{2\pi i j_{\alpha_{q1}}\big(\sum_{p=1}^{r_q}\ell_{\alpha_{qp}}- \ell_{\alpha_{qp}{+}1}\big)}\\
&=& n^{-m}c_{\lambda_1,\ldots,\lambda_m} \left(\sum_{j_{11}=1}^n e^{2\pi i j_{\alpha_{11}}\big(\sum_{p=1}^{r_1}\ell_{\alpha_{1p}}- \ell_{\alpha_{1p}{+}1}\big)}\right)\ldots
\left(\sum_{j_{s1}=1}^n e^{2\pi i j_{\alpha_{s1}}\big(\sum_{p=1}^{r_1}\ell_{\alpha_{sp}}- \ell_{\alpha_{sp}{+}1}\big)}\right)
\end{eqnarray*}
with $|c_{\lambda_1,\ldots,\lambda_m}| = 1$
and $\widetilde{k_{\alpha_{q1}}}=0$, $\widetilde{k_{\alpha_{q2}}}=k_{\alpha_{q2}}-k_{\alpha_{q1}}$,  $\widetilde{k_{\alpha_{q3}}}=k_{\alpha_{q3}}+k_{\alpha_{q2}}-k_{\alpha_{q1}}$, etc.
Hence, $E_{\lambda_1,\ldots,\lambda_m}=0$ if not simultaneously, we have
\begin{eqnarray*}
 \ell_{\alpha_{q1}}+ \ell_{\alpha_{q2}}+\ldots +\ell_{\alpha_{q(r_1{-}1)}}+\ell_{\alpha_{qr_1}}
 &=&\ell_{\alpha_{q2}+1}+\ell_{\alpha_{q3}+1} +\ldots+\ell_{\alpha_{qr_q}+1}+ \ell_{\alpha_{q1}+1}
\end{eqnarray*}
for $q=1,\ldots, s$ in which case $E_{\lambda_1,\ldots, \lambda_m}=c_{\lambda_1,\ldots, \lambda_m}n^{s-m}$. Consequently $|E_{\lambda_1,\hdots,\lambda_m}| = n^{s-m}$ whenever $\lambda_1,\ldots,\lambda_m$ satisfies the $s$ linear equations
\begin{eqnarray}
     \lambda_{\alpha_{q1}}+ \lambda_{\alpha_{q2}}+\ldots +\lambda_{\alpha{q(r_q{-}1)}}+\lambda_{\alpha_{qr_q}}
    &=&\lambda_{\alpha_{q2}+1}+\lambda_{\alpha_{q3}+1} +\ldots+\lambda_{\alpha_{qr_q}+1}+ \lambda_{\alpha_{q1}+1}
\label{equation:systemofequations},
\end{eqnarray}
$q=1,\ldots,s$.
We conclude that of the $|\Lambda|(|\Lambda|{-}1)^{m{-}1}(|\Lambda|{-}2)\cdot n^m$ summands in \eqref{equation:V2sumsexpectedvaluesummands}, only those need
to be considered that correspond to a partition of the indices
$1,\ldots,m$ of the $j$'s into cyclic permutations and where the
$\lambda_1,\ldots,\lambda_m$ satisfy a corresponding system (\ref{equation:systemofequations}) of equations.
This observation will be used to estimate $\E[\Tr H^m]$  in the following step.

\noindent {\it Step 2.}
We observe in addition to the above, that $E_{\lambda_1,\hdots,\lambda_m}$ with $\lambda_i = (k_i,\ell_i)$
contributes only if
$k_1\neq k_2$, $k_2 \neq k_3$, $\hdots$, $k_m \neq k_1$. Indeed,
$k_i = k_{i+1}=k$ implies $\ell_i \neq \ell_{i+1}$ by $\lambda_i \neq \lambda_{i+1}$.
But since $g$ and hence $T_{k} g$ is unimodular by assumption, the set
$\{M_\ell T_kg , \ell = 0,\hdots,n-1\}$ forms an orthogonal system, and we have
$\langle M_{\ell_i} T_k g, M_{\ell_{i+1}} T_k g\rangle = 0$, implying that
$E_{\lambda_1,\hdots,\lambda_m} = 0$. The condition $k_i \neq k_{i+1}$ in turn
implies that each cycle in (\ref{def:cycle}) has at least two elements, as
otherwise, (\ref{equation:permutationrules}) would lead to a contradiction.

Now for each permutation with $s$ cycles
described by (\ref{def:cycle}) we give an upper bound on the
number of index tuples
$(\lambda_1,\hdots,\lambda_m)$ satisfying the $s$ equations
(\ref{equation:systemofequations}). To this end, we shall show that any $s-1$ equations of the $s$ equations
(\ref{equation:systemofequations}) are linearly independent. 

First, note that each $\lambda_j$ appears on exactly one left hand side 
and on one right hand side of the equations (\ref{equation:systemofequations}).
Hence, a linear combination of these equations leading to the trivial
equation $0=0$ can be achieved involving only $0$'s 
and $1$'s as coefficients, that is, by simply adding up some of the equations 
in (\ref{equation:systemofequations}). But the fact that the right hand side 
of an equation consists exactly of the successor variables of the left hand 
side implies that a vanishing sum of equations in 
(\ref{equation:systemofequations}) must contain all variables on both sides. 
As all equations are non-trivial, this is achieved if and only if the sum is 
taken over all equations. Hence, the $s$ equations 
(\ref{equation:systemofequations}) are linearly dependent, 
while any $s-1$ equations are not.
We conclude that the system
(\ref{equation:systemofequations}) describes an $m-(s-1)$-dimensional
subspace
whose its intersection
with $\Lambda^m$ has at most $|\Lambda|^{m-(s-1)}$ elements.

By definition of the associated Stirling numbers of the first kind
there are $d_2(m,s)$ permutations with $s$ disjoint cycles of
minimum length 2 of the index set $\{1,\ldots,m\}$. Each of these permutations
represent $n^{s}$ tuples $(j_1,\ldots,j_m)$ and,
at most $|\Lambda|^{m{-}(s{-}1)}$ tuples $(\lambda_1,\ldots,\lambda_m) \in \Lambda^m$
satisfing
\eqref{equation:systemofequations}. Each of these tuples of indices gives a
contribution to (\ref{equation:V2sumsexpectedvaluesummands})
of absolute value at most $n^{-m}$. Finally, this yields
\begin{eqnarray}
  \E[{\Tr}H^{m}]&\leq& \sum_{s=1}^{m/2} d_2(m,s) \ |\Lambda|^{m-(s{-}1)}  \ n^{s-m} \notag\\
    &=&|\Lambda| \left(\frac{|\Lambda|} n \right)^{m} \ \ \sum_{s=1}^{m/2} d_2(m,s) \,\left(\frac n {|\Lambda|} \right)^s\notag
\end{eqnarray}
and the proof of Lemma~\ref{lem:frob} is complete.
\end{proof}

\subsection{Proof of Theorem \ref{thm:opnorm}}

Given specific parameters $n$ and $\sparsity= |\Lambda|$ one may already obtain
good estimates for the probability that $\| I_\Lambda - \Psi_\Lambda^* \Psi_\Lambda\|\leq \delta$ by numerically minimizing
the right hand side of \eqref{equation:LemmaCount1} over $m \in \N$ and using
(\ref{Pineq1}).
Theorem~\ref{thm:opnorm} is proven
by pursuing a similar strategy combined with an estimate of the
numbers $d_2(m,s)$, compare also to \cite{grpora07}.

We first claim that the associated Stirling numbers
of the first kind satisfy the estimate
\begin{eqnarray}
 d_2(m+1,s) \,\leq\, (2m)^{m-s}. \label{equation:d2mEstimate}
\end{eqnarray}
Indeed, \eqref{equation:d2mEstimate} is true for $m\geq 1$ and
$s=0$ or $s > m/2$ since then $d_2(m,s) = 0$ by (\ref{bound:cond}).
It is also true for $d_2(2,1) = 1$.
Now let $m\geq 2$ and suppose the claim is true for all
$d_2(m',s)$ with $m' \leq m$
and $s\geq 0$.
Then
\begin{align}
d_2(m+1,s) &= m(d_2(m,s) + d_2(m-1,s{-}1))
\leq m ((2(m-1))^{m-1-s} + (2(m-2))^{m-2-(s{-}1)})\notag\\
&\leq 2m (2m)^{m-1-s} = (2m)^{m-s}. \notag
\end{align}
Now let
\begin{equation}\label{def:Gm}
G_{2m}(z) \,:=\,z^{-2m} \sum_{s=1}^m d_2(2m,s) z^s.
\end{equation}
By Lemma \ref{lem:frob},
$\E [\|H^{m}\|^2] \leq \E [\Tr H^{2m}] \leq \sparsity G_{2m}(n/\sparsity)$.
Using the estimate \eqref{equation:d2mEstimate} we obtain
\begin{align}
G_{2m}(z) \, &\leq \, z^{-2m} \sum_{s=1}^m (2 (2m-1))^{2m-1-s} z^s
\leq \left(\frac{4m}{z}\right)^{2m} (4m)^{-1} \sum_{s=1}^m (z/4m)^{s}\notag\\
&=\, \left(\frac{4m}{z}\right)^{2m} (4m)^{-1} \frac{(z/4m)^{m+1} - (z/4m)}{z/4m -1}\notag\\
&= \, (4m)^{-1} \left(\frac{4m}{z}\right)^{m} \frac{1 - (4m/z)^m}{1- (4m/z)}.
\end{align}
Now choose $m = m_z \in \N$ such that $4m_z/z \leq \alpha < 1$, for instance
\begin{align}\label{def:mx}
m_z \,:=\, \left\lfloor \frac{\alpha z}{4} \right\rfloor.
\end{align}
Then
\begin{equation}
G_{2m_z}(z) \leq (4m_z)^{-1} \frac{\alpha^{m_z}}{1- \alpha} \leq
\frac{\alpha^{m_z}}{4(1-\alpha)}. \label{G:estim}
\end{equation}
We want to achieve $\P\big(\|H\| > \delta\big) \leq \varepsilon$, which by (\ref{Pineq1})
will be satisfied provided
\[
\delta^{-2m_z} \sparsity \frac{\alpha^{m_z}}{4(1-\alpha)} \leq \varepsilon
\]
for $z = n/\sparsity$.
Assuming $\alpha < \delta^2$ the latter inequality is equivalent to
\[
m_z \log(\delta^2/\alpha)\, \geq \, \log\left(\frac{\sparsity}{4\varepsilon(1-\alpha)}\right).
\]
Plugging in $z = n/\sparsity$ and $m_z$ given by (\ref{def:mx}) we obtain
\[
\left\lfloor \frac{\alpha n}{4\sparsity} \right\rfloor \log\left(\delta^2/\alpha\right) \geq
\log \left( \frac{\sparsity}{4\varepsilon (1-\alpha)}\right).
\]
Finally, choose $\alpha = \delta^2 / e$. Then the above inequality reduces to
\[
\left\lfloor \frac{\delta^2 n}{4e\sparsity} \right\rfloor \geq
\log \left( \frac{\sparsity}{4 \varepsilon (1- \delta^2/e)}\right).
\]
A straightforward calculation shows that the above
equation is satisfied whenever
\[
n \geq \frac{4e}{\delta^2} \sparsity \log\left(\frac{e}{4(1-\delta^2/e)}\frac{\sparsity}{\varepsilon}\right).
\]
Finally, the above inequality is implied by the assumption
of Theorem~\ref{thm:opnorm}.

\begin{remark} Starting from the first inequality in (\ref{G:estim}) and
proceeding analogously as in the previous proof one may
deduce the slightly better but more complicated condition
\[
n \geq \frac{4e}{\delta^2} \sparsity \left(\log\left(\frac{\sparsity^2}{\delta^2 n - 4 e \sparsity}\, \varepsilon^{-1} \right) + \log\left(\frac{e^3}{e-1}\right) \right).
\]
ensuring $\|I_\Lambda - \Psi_\Lambda^* \Psi_\Lambda\|\leq \delta$ with probability
at least $1-\varepsilon$.
\end{remark}

\section{Recovery of random signals}
\label{Sec:RPh}

Theorem~\ref{thm:randomphase} addresses the recovery of signals
whose sparse coefficients in a Gabor expansion are chosen with random phases.
Its proof is based on a recovery result due
to Tropp \cite{tr05-1} and Fuchs \cite{fu04} which is given in our framework as Lemma~\ref{lem:Tropp} below.

Let $\psi_\lambda = \pi(\lambda) g$ be the column of $\Psi$ indexed by $\lambda$.
By $R_\Lambda x$ we denote the restriction of a vector $x$ to the
index set $\Lambda$. Furthermore, $\sgn(x)$ is the sign of a vector, that is,
$\sgn(x)_k = x_k/|x_k|$ for the non-zero entries of $x$ and $\sgn(x)_k =0$ else.

\begin{lemma}\label{lem:Tropp} Suppose that $y = \Psi x$ for some $x$ with $\supp x = \Lambda$.
If
\begin{equation}\label{recovery:cond}
|\langle \Psi_\Lambda^\dagger \psi_\rho, R_\Lambda \sgn(x) \rangle | < 1 \quad \mbox{ for all } \rho \notin \Lambda\,,
\end{equation}
then $x$ is the unique solution of the Basis Pursuit problem (\ref{def:Basis Pursuit}).
Here $\Psi_\Lambda^\dagger$ denotes the Moore-Penrose
pseudo-inverse of $\Psi_\Lambda$.
\end{lemma}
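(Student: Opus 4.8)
The plan is to recall the standard convex-duality argument underlying the Tropp--Fuchs recovery condition, producing an explicit dual certificate directly out of hypothesis \eqref{recovery:cond}. First I would assume (as is implicit in the use of $\Psi_\Lambda^\dagger$) that $\Psi_\Lambda$ has full column rank, so that $\Psi_\Lambda^\dagger = (\Psi_\Lambda^* \Psi_\Lambda)^{-1}\Psi_\Lambda^*$ is a genuine left inverse, and form the vector
\[
h \,=\, \Psi_\Lambda (\Psi_\Lambda^* \Psi_\Lambda)^{-1} R_\Lambda \sgn(x) \,=\, (\Psi_\Lambda^\dagger)^* R_\Lambda \sgn(x)\,.
\]
A short computation, moving $(\Psi_\Lambda^*\Psi_\Lambda)^{-1}$ across the inner product using its self-adjointness, shows that $h$ is an optimality certificate: on the support $\langle h,\psi_\lambda\rangle = \sgn(x_\lambda)$ for every $\lambda\in\Lambda$, while off the support $|\langle h,\psi_\rho\rangle| = |\langle \Psi_\Lambda^\dagger \psi_\rho, R_\Lambda \sgn(x)\rangle| < 1$ for every $\rho\notin\Lambda$, which is exactly \eqref{recovery:cond}. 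Equivalently, $(\Psi^* h)_\lambda = \sgn(x_\lambda)$ on $\Lambda$ and $|(\Psi^* h)_\rho| < 1$ off it.

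With the certificate in hand I would compare $x$ against an arbitrary feasible $z$ with $\Psi z = y$. Writing $v = z-x \in \ker\Psi$ and splitting the $\ell_1$-norm, the convexity bound $|x_\lambda + v_\lambda| \geq |x_\lambda| + \mathrm{Re}\big(\overline{\sgn(x_\lambda)}\,v_\lambda\big)$ on the support, together with $|x_\rho + v_\rho| = |v_\rho|$ off it, yields
\[
\|z\|_1 \;\geq\; \|x\|_1 + \sum_{\lambda\in\Lambda}\mathrm{Re}\big(\overline{\sgn(x_\lambda)}\,v_\lambda\big) + \sum_{\rho\notin\Lambda}|v_\rho|\,.
\]
The decisive identity is that $v\in\ker\Psi$ forces $\langle \Psi^* h, v\rangle = \langle h,\Psi v\rangle = 0$; substituting the certificate values of $\Psi^* h$ rewrites the support sum as $-\,\mathrm{Re}\sum_{\rho\notin\Lambda}(\Psi^* h)_\rho\,\overline{v_\rho}$, which is bounded below by $-\sum_{\rho\notin\Lambda}|(\Psi^* h)_\rho|\,|v_\rho|$. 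Inserting this gives $\|z\|_1 \geq \|x\|_1 + \sum_{\rho\notin\Lambda}\big(1 - |(\Psi^* h)_\rho|\big)|v_\rho| \geq \|x\|_1$, so $x$ is a minimizer.

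For uniqueness the strict inequality in \eqref{recovery:cond} is exactly what is needed: since $1 - |(\Psi^* h)_\rho| > 0$ for every $\rho\notin\Lambda$, equality $\|z\|_1 = \|x\|_1$ forces $v_\rho = 0$ off the support, so $v$ is supported on $\Lambda$; then $0 = \Psi v = \Psi_\Lambda (R_\Lambda v)$ and the injectivity of $\Psi_\Lambda$ give $v=0$, i.e.\ $z=x$. The only genuine prerequisite, and hence the one thing to keep track of, is the full column rank of $\Psi_\Lambda$, which both makes the pseudo-inverse certificate well defined and closes the final injectivity step; this is precisely the conditioning that Theorem~\ref{thm:opnorm} supplies with high probability. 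Since the statement is due to Tropp \cite{tr05-1} and Fuchs \cite{fu04}, the work here is to reproduce their argument in the present notation rather than to prove anything new.
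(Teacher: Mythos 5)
Your argument is correct. Note, however, that the paper itself offers no proof of Lemma~\ref{lem:Tropp}: it is stated as a known result of Tropp \cite{tr05-1} and Fuchs \cite{fu04}, so there is no in-paper argument to compare against. Your dual-certificate derivation --- building $h=(\Psi_\Lambda^\dagger)^*R_\Lambda\sgn(x)$, verifying $\Psi_\Lambda^*h=R_\Lambda\sgn(x)$ on the support and $|(\Psi^*h)_\rho|<1$ off it, and then combining the subgradient inequality with $\langle \Psi^* h,v\rangle=0$ for $v\in\ker\Psi$ --- is exactly the standard proof from those references, and you are right to flag that the implicit hypothesis is the injectivity of $\Psi_\Lambda$ (which both legitimises the formula $\Psi_\Lambda^\dagger=(\Psi_\Lambda^*\Psi_\Lambda)^{-1}\Psi_\Lambda^*$ and closes the uniqueness step); in the paper this is supplied by Theorem~\ref{thm:opnorm}. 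The only cosmetic caveat is a possible conjugate on $\sgn(x_\lambda)$ depending on the inner-product convention, which does not affect the estimate.
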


\subsection{Proof of Theorem \ref{thm:randomphase}}

We will use Lemma \ref{lem:Tropp} in combination
with Theorem~\ref{thm:opnorm} and an estimation of the coherence
of $\Psi$ given in \cite{pfrata07} to prove Theorem~\ref{thm:randomphase}
concerning recovery by Basis Pursuit of sparse signals with
random phases.

We aim at using the following Bernstein type inequality
for a sequence of independent random variables $\epsilon_k$ having
uniform distribution on the torus \cite[Proposition 16]{tr06-2},
\begin{equation}\label{Bernstein}
\P\big(|\sum_{j} \epsilon_j a_j|\geq u \|a\|_2\big) \leq \frac{e^{-\kappa u^2}}{1-\kappa}
\end{equation}
for any $\kappa \in (0,1)$. By our assumption on the random
phases $\epsilon_\lambda = \sgn(x_\lambda)$, the scalar product on the
left hand side of (\ref{recovery:cond}) is precisely of the above form
with $a = \Psi_\Lambda^\dagger \psi_\rho$. The $2$-norm of this particular $a$
can be estimated by
\[
\|\Psi_\Lambda^\dagger \psi_\rho\|_2 \,=\, \|(\Psi_\Lambda^* \Psi_\Lambda)^{-1} \Psi_\Lambda^* \psi_\rho \|_2 \leq \|(\Psi_\Lambda^* \Psi_\Lambda)^{-1}\|
\|\Psi_\Lambda^* \psi_\rho\|_2.
\]
Now suppose that $\|I - \Psi_\Lambda^* \Psi_\Lambda\|
\leq \delta$. The probability
that this is the case is estimated by Theorem~\ref{thm:opnorm}. Then
$\|(\Psi_\Lambda^* \Psi_\Lambda)^{-1}\|
\leq \frac{1}{1-\delta}$. Furthermore,
observe that
\[
\|\Psi_\Lambda^* \psi_\lambda\|_2 =
\left( \sum_{\lambda \in \Lambda} |\langle \psi_{\lambda}, \psi_\rho \rangle|^2 \right)^{1/2}
\leq \sqrt{\sparsity} \mu,
\]
where $\mu = \max_{\lambda'\neq \lambda} |\langle \psi_{\lambda'},\psi_\lambda \rangle|$
denotes the coherence of $\Psi$. Combining the above estimates yields
\begin{equation}\label{equation:CombineEstimates1}
\|\Psi_\Lambda^\dagger \psi_\rho\|_2 \leq \frac{1}{1-\delta} \sqrt{\sparsity} \mu.
\end{equation}
Now, the probability that recovery fails can be estimated by
\begin{align}
&\P\big(|\langle \Psi_\Lambda^\dagger \psi_\rho, R_\Lambda \sgn(x) \rangle| \geq 1
\mbox{ for some } \rho \notin \Lambda\big)\notag\\[.2cm]
&\leq \P\big(\,|\langle \Psi_\Lambda^\dagger \psi_\rho, R_\Lambda \sgn(x) \rangle| \geq 1
\mbox{ for some } \rho \notin \Lambda\ \big| \ \mu \leq \frac{\alpha}{\sqrt{n}}\, \mbox{\&}\, \|H\| \leq \delta\,\big) \notag\\[.2cm]&\qquad + \P\big(\mu > \frac{\alpha}{\sqrt{n}}\big) + \P\big(\|H\| > \delta\big) \notag\\[.2cm]
&\leq \sum_{\rho \notin \Lambda} \P\big(\,|\langle \Psi_\Lambda^\dagger \psi_\rho, R_\Lambda \sgn(x) \rangle| \geq 1\ \big| \ \mu \leq \frac{\alpha}{\sqrt{n}} \, \mbox{\&}\, \|H\| \leq \delta\,\big) \notag\\[.2cm]&\qquad+
\P\big(\mu > \frac{\alpha}{\sqrt{n}}\big) + \P\big(\|H\| > \delta\big).\notag
\end{align}
Equation \eqref{equation:CombineEstimates1} implies that for $u=\frac {(1-\delta)\sqrt n}{\alpha \sqrt S}$ we have $u\| \Psi_\Lambda^\dagger \psi_\rho\|_2\leq 1$, so (\ref{Bernstein}) gives
\begin{equation}\label{equation:CombineEstimates2}
\P\big(\,|\langle \Psi_\Lambda^\dagger \psi_\rho, R_\Lambda \sgn(x) \rangle| \geq 1\ \big|\ \mu \leq \frac{\alpha}{\sqrt{n}}\,  \mbox{\&} \, \|H\| \leq \delta\, \big)\
\leq \ (1-\kappa)^{-1} \exp\left(-\kappa \frac{(1-\delta)^2}{\alpha^2} \frac{n}{\sparsity}\right).
\end{equation}
In \cite[Theorem 5.1]{pfrata07} it was proven that
\begin{equation}\label{coherence:gR}
\P\big(\mu > \frac{\alpha}{\sqrt{n}}\big) \leq 2(1-\kappa')^{-1}n (n-1) \exp(-\kappa'\alpha^2/2)
\end{equation}
for any $\kappa' \in (0,1)$. Combining  inequalities \eqref{equation:CombineEstimates2}, \eqref{coherence:gR} and \eqref{ineq:op2}, we obtain the following bound on the probability that recovery
fails
\[
(n^2-\sparsity) (1-\kappa)^{-1} \exp\left(-\kappa\frac{(1-\delta)^2}{\alpha^2} \frac{n}{\sparsity}\right)
+ 2(1-\kappa')^{-1}n (n-1) \exp(-\kappa'\alpha^2/2) + C \sparsity \exp\left(-\frac{\delta^2 n}{4e \sparsity}\right).
\]
Choosing $\alpha = \sqrt{\log(n^\sigma)}$ with $\sigma > 8$,
$\kappa = \kappa' = 1/2$ and
$\delta = 1/2$ the above expression equals
\begin{align}
&2(n^2-\sparsity) \exp\left(-\frac{1}{8\sigma} \frac{n}{\sparsity \log(n)}\right)
+4n(n-1) n^{-\sigma/4} + C\sparsity \exp\left(-\frac{1}{16e} \frac{n}{\sparsity}\right)\notag\\[.2cm]
&\qquad  \leq 2(n^2-\sparsity) \exp\left(- \frac{n}{8\sigma \sparsity\log(n)}\right)
+ 4 n^{-(\sigma/4 - 2)} + CS \exp\left(- \frac{n}{16e \sparsity}\right) \notag.
\end{align}
This completes the proof.

\section{Recovery of deterministic signals}
\label{Sec:Det}

In this section we prove Theorem~\ref{thm:recover}. As
an auxiliary tool we first provide a general recovery lemma.

\subsection{A general recovery lemma}

The following lemma holds for any (random) matrix $\Psi$.
It is inspired by the analysis performed in \cite{carota06} and \cite{ra05-7}.

Let $\Lambda$ be a subset of the column index set of $\Psi$,
and $\Lambda^c$ its complement.
Let $E_\Lambda = R_\Lambda^*$ be the adjoint of the restriction
operator $R_\Lambda$; clearly, $E_\Lambda$
extends a vector outside $\Lambda$ by $0$. Further, we define
\[
H = \Psi_\Lambda^* \Psi_\Lambda - I_\Lambda
\qquad \mbox{and} \qquad
K = \Psi^* \Psi_\Lambda - E_\Lambda.
\]
Observe that $H = R_\Lambda K$. With this notation we have
\begin{lemma}\label{lem:aux} Let $x$ be supported on $\Lambda$ with $|\Lambda| = \sparsity$.
Let $\beta > 0$, $\kappa > 0$,
$m \in \N$ and $L_t \in \N, t=1,\hdots,m$, be parameters such that
\begin{equation}\label{cond_kappa00}
a := \sum_{t=1}^m \beta^{m/L_t} < 1 \qquad\mbox{and}\qquad
\frac{\kappa}{1-\kappa} \leq \frac{1-a}{1+a}\, \sparsity^{-3/2}.
\end{equation}
Then with probability at most
\begin{equation}\label{prob:est}
 \kappa^{-2} \E\left[\|H^m\|_F^2\right] + \beta^{-2m}\sum_{\rho \in \Lambda^c} \sum_{t=1}^m \E\left[|((KR_\Lambda)^t \sgn(x))_\rho|^{2L_t} \right]
\end{equation}
Basis Pursuit fails to recover $x$ from $\Psi x$.
\end{lemma}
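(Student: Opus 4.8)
The proof rests on the Fuchs/Tropp recovery criterion from Lemma~\ref{lem:Tropp}: it suffices to show that, with high probability, $|\langle \Psi_\Lambda^\dagger \psi_\rho, R_\Lambda \sgn(x)\rangle| < 1$ for every $\rho \notin \Lambda$. The guiding idea is to expand $\Psi_\Lambda^\dagger = (\Psi_\Lambda^*\Psi_\Lambda)^{-1}\Psi_\Lambda^*$ and to control the inverse $(\Psi_\Lambda^*\Psi_\Lambda)^{-1} = (I_\Lambda + H)^{-1}$ through a Neumann series in $H$, while simultaneously controlling the off-support action of $\Psi$, which is encoded in $K = \Psi^*\Psi_\Lambda - E_\Lambda$. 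The two parameters governing the two tails in \eqref{prob:est} are precisely $\kappa$ (attached to the $H$-part, i.e.\ conditioning of $\Psi_\Lambda$) and $\beta$ (attached to the $K$-part, i.e.\ cross-correlations).

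First I would write $\langle \Psi_\Lambda^\dagger \psi_\rho, R_\Lambda \sgn(x)\rangle = \langle (I_\Lambda+H)^{-1} R_\Lambda \Psi^*\psi_\rho, R_\Lambda\sgn(x)\rangle$ and observe that $R_\Lambda\Psi^*\psi_\rho = (K R_\Lambda)$ applied appropriately, since for $\rho\notin\Lambda$ the vector $R_\Lambda\Psi^*\psi_\rho$ is exactly the $\rho$-th piece of $K$ hitting the sign pattern. The plan is to expand $(I_\Lambda+H)^{-1} = \sum_{t\geq 0}(-H)^t$ and to truncate cleverly so that the criterion becomes a statement about the quantities $((KR_\Lambda)^t\sgn(x))_\rho$ that appear in \eqref{prob:est}. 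The role of the exponents $L_t$ is to let each term $t$ in the Neumann-type expansion be estimated in its own $\ell_{2L_t}$-moment, which is why the condition $a = \sum_t \beta^{m/L_t} < 1$ appears: it guarantees that the geometric-type sum of the truncated tail stays strictly below the threshold $1$ with room to spare, the surplus being quantified by $\frac{1-a}{1+a}$.

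The technical core is a conditioning-and-splitting step. On the event $\{\|H\|\le \kappa\}$ (more precisely a bound forcing $\|(I_\Lambda+H)^{-1}\|$ close to $1$) together with smallness of the $K$-terms, the failure set is empty; so I would bound the failure probability by the sum of the probability that $\|H\|$ is large and the probabilities that some $((KR_\Lambda)^t\sgn(x))_\rho$ exceeds $\beta^t$. Each of these is then converted to a moment via Markov's inequality: the first gives $\kappa^{-2}\E[\|H^m\|_F^2]$ after using that the Frobenius norm dominates the operator norm and that $H$ is self-adjoint (exactly as in \eqref{Pineq1}), and the second gives, for each $\rho$ and each $t$, a factor $\beta^{-2m}\E[|((KR_\Lambda)^t\sgn(x))_\rho|^{2L_t}]$ after raising to the power $L_t$ and summing over $\rho\in\Lambda^c$ and $t$. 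Summing these contributions yields exactly \eqref{prob:est}.

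The main obstacle is to make the Neumann truncation and the union over the off-support indices fit together so that the second condition in \eqref{cond_kappa00}, namely $\frac{\kappa}{1-\kappa}\le \frac{1-a}{1+a}\,\sparsity^{-3/2}$, is what is actually needed. The delicate point is that a term of the expansion pairs the operator $(I_\Lambda+H)^{-1}$ (controlled by $\kappa$) against a power of $K$ (controlled by $\beta$), so one must bound the cross term $\|(I_\Lambda+H)^{-1}\|$ times the $\ell_2$-size of $\sgn(x)$ (which is $\sqrt{\sparsity}$) and still extract a power of $\sparsity$ to spare; the $\sparsity^{-3/2}$ is the precise budget that survives after the Cauchy--Schwarz step pairing $R_\Lambda\sgn(x)$ (of norm $\sqrt{\sparsity}$) with the residual tail. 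I would carry out the Cauchy--Schwarz bookkeeping carefully, tracking where each $\sqrt{\sparsity}$ enters, to confirm that $\frac{1-a}{1+a}\,\sparsity^{-3/2}$ is exactly the threshold that separates the truncated head (forced below $1$ by $a<1$) from the controlled tail.
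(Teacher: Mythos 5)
Your plan follows the paper's proof essentially step for step: the Fuchs/Tropp criterion of Lemma~\ref{lem:Tropp}, the decomposition of $(I_\Lambda+H)^{-1}$ via a truncated Neumann series whose head produces the $(KR_\Lambda)^t\sgn(x)$ terms and whose tail $A_m=\sum_{r\ge 1}(-H)^{rm}$ is controlled by $\|H^m\|_F$, the splitting of the threshold $1$ into $a+b$, and Markov's inequality applied to $\|H^m\|_F^2$ and to the $2L_t$-th moments with thresholds $\beta_t=\beta^{m/L_t}$ so that $\beta_t^{-2L_t}=\beta^{-2m}$. One small correction to your heuristic: the factor $\sparsity^{3/2}$ in \eqref{cond_kappa00} arises as $\|K\|_{\infty\to\infty}\le \sparsity$ (each of the $\sparsity$ columns of $K$ has entries bounded by $1$) times $\|A_m\|_{\infty\to\infty}\le\sqrt{\sparsity}\,\|A_m\|_F\le\sqrt{\sparsity}\,\kappa/(1-\kappa)$ by Cauchy--Schwarz over the $\sparsity$ columns of $A_m$, not from $\|R_\Lambda\sgn(x)\|_2=\sqrt{\sparsity}$; the sign vector enters only through the bound $\|Q_{m-1}\sgn(x)\|_\infty\le a$ on the good event.
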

\begin{proof} We reassemble the arguments
from \cite{ra05-7,carota06} for the reader's convenience.

First, we address the recovery condition (\ref{recovery:cond}).
Let $\Lambda$ be
the support of $x$. Define
\[
P \,:=\, \Psi^* \Psi_\Lambda (\Psi_\Lambda^* \Psi_\Lambda)^{-1} R_\Lambda \sgn(x).
\]
Note that condition (\ref{recovery:cond}) in Lemma \ref{lem:Tropp}
is equivalent to
\[
\| R_{\Lambda^c} P \|_\infty < 1.
\]
The vector $P$ can also be written as
\[
P \,=\, (E_\Lambda + K) \left(I_\Lambda + H\right)^{-1} R_\Lambda \sgn(x),
\]
and, since $R_{\Lambda^c} E_\Lambda = 0$ we have
\[
R_{\Lambda^c}P \,=\, R_{\Lambda^c} K (I_\Lambda + H)^{-1} R_\Lambda \sgn(x).
\]
Let us look closer at the term $(I_\Lambda + H)^{-1}$.
By the Neumann
series we can write
$
\left(I_\Lambda - (-H)^m\right)^{-1} = I_\Lambda + A_m
$
with
\begin{equation}\label{def_An}
A_m \,:=\, \sum_{r=1}^\infty (-H)^{rm}.
\end{equation}
Using the algebraic identity
$(1-M)^{-1} \,=\, (1- M^m)^{-1}(1 + M + \cdots + M^{m{-}1})$
we obtain
\[
(I_\Lambda + H)^{-1} \,=\, (I_\Lambda +A_m)\sum_{t=0}^{m{-}1} (-H)^t.
\]
Thus, on the complement of $\Lambda$, we may write
\begin{eqnarray*}
R_{\Lambda^c} P &=&  K (I_\Lambda + A_m)
\left(\sum_{t=0}^{m{-}1} (-H)^t\right) R_\Lambda \sgn(x)\\
&=& P^{(1)} + P^{(2)},
\end{eqnarray*}
where
\[
P^{(1)} \,:=\, -Q_m \, \sgn(x) \quad\mbox{ and }\quad
P^{(2)} \,:=\, K A_m R_\Lambda (I + Q_{m{-}1})\, \sgn(x),
\]
with
\[
Q_m \,:=\,  -\sum_{t=0}^{m-1} K (- R_\Lambda K)^t R_\Lambda  \,= \, \sum_{t=1}^m (-K R_\Lambda)^t.
\]
With this at hand, we can now proceed to estimate $\P\big(\sup_{\rho \in \Lambda^c} |P_\rho| \geq 1\big)$.
To this end let $a,b>0$ be numbers satisfying $a + b = 1$. Then
\begin{equation}\label{Psplit}
\P\big(\sup_{\rho \in \Lambda^c} |P_\rho| \geq 1\big) \,\leq\,
\P\left(\{\sup_{\rho \in \Lambda^c} |P^{(1)}_\rho| \geq a\} \cup
\{\sup_{\rho \in \Lambda^c} |P^{(2)}_\rho| \geq b\}\right).
\end{equation}
Clearly,
\begin{align}
\P\big(|P^{(1)}_\rho|\geq a\big)
\label{Pinf_estim}
\, \leq \, \P\left(\sum_{t=1}^m |((KR_\Lambda)^t \sgn(x))_\rho| \geq a\right)
\,=:\, \P\big(\Omega_\rho\big), \quad \rho \in \Lambda^c.
\end{align}
For $P^{(2)}$ we obtain
\begin{equation}\label{P2_ineq}
\sup_{\rho \in \Lambda^c} |P^{(2)}_\rho| \leq \|P^{(2)}\|_\infty
\leq \|K A_m\|_{\infty \to \infty}
(1 + \|R_\Lambda Q_{m{-}1} \sgn(x)\|_{\infty}).
\end{equation}
In order to analyze the term $\|R_\Lambda Q_{m{-}1} \sgn(x)\|_{\infty}$
we observe that similarly as in (\ref{Pinf_estim})
\[
\P\big(|(Q_{m{-}1} \sgn(x))_\rho| \geq a\big)\,\leq\,
\P\left(\sum_{t=1}^m |(K R_\Lambda)^t \sgn(x))_\rho| \geq a\right) \,=\,\P\big(\Omega_\rho\big), \quad \rho \in \Lambda^c.
\]
Let us now focus on the operator norm appearing in (\ref{P2_ineq}).
It holds $\|A\|_{\infty \to \infty} = \sup_{j} \sum_\ell |A_{j\ell}|$. Clearly,
\begin{equation}\label{equation:CombineEstimates3}
\|K A_m\|_{\infty\to \infty} \leq \|K\|_{\infty \to \infty} \|A_m\|_{\infty \to \infty}.
\end{equation}
Moreover,
\begin{equation}\label{equation:CombineEstimates4}
\|K\|_{\infty \to \infty} \leq |\Lambda| = \sparsity
\end{equation}
as $K$ has $\sparsity$ columns and
each entry is bounded by $1$ in absolute value.

Let us analyze $A_m$ using the Frobenius norm.
Assume for the moment that
\begin{equation}\label{assume_H0}
\|H^m\|_F \,\leq\, \kappa < 1.
\end{equation}
Then it follows directly from the definition (\ref{def_An}) of  $A_m$ that
\[
\|A_m\|_F \,=\, \left\|\sum_{r=1}^\infty (-H)^{rm}\right\|_F
\,\leq\, \sum_{r=1}^\infty \| H^m\|_F^r
\,\leq\, \sum_{r=1}^\infty \kappa^{r}
\,=\, \frac{\kappa}{1-\kappa}.
\]
Moreover, since $A_m$ has $|\Lambda| = \sparsity$ columns
it follows from the Cauchy-Schwarz inequality that
\begin{equation}\label{equation:CombineEstimates5}
\|A_m\|_{\infty\to\infty}^2 \,\leq\, \sup_{\lambda \in \Lambda} |\Lambda| \sum_{\lambda'} |(A_m)_{\lambda,\lambda'}|^2
\,\leq\, \sparsity \|A_m\|_F^2.
\end{equation}
So assuming (\ref{assume_H0})
and $\|Q_{m{-}1} \sgn(x)\|_\infty < a$, we can combine \eqref{equation:CombineEstimates3}, \eqref{equation:CombineEstimates4} and \eqref{equation:CombineEstimates5} to obtain
\[
\sup_{\rho \in \Lambda^c} |P^{(2)}_\rho| \,\leq\, (1+a) \, \sparsity^{3/2}\,
\frac{\kappa}{1-\kappa}.
\]
By assumption of the lemma
\begin{equation}
\frac{\kappa}{1-\kappa} \, \leq\,  \frac{1-a}{1+a}\,  \sparsity^{-3/2}\, =\, \frac{b}{1+a}\, \sparsity^{-3/2},
\end{equation}
and $\sup_{\rho\in \Lambda^c} |P^{(2)}_\rho| \leq b$ under condition
(\ref{assume_H0}) as desired.
Also it follows from (\ref{cond_kappa00}) that $\kappa < 1$ as
$\sparsity \geq 1$ without loss of generality
(if $\Lambda=\emptyset$ then $x=0$
and $\ell^1$-minimization will clearly recover $x$.)

Using the union bound we  obtain from (\ref{Psplit})
\begin{eqnarray}
\P\big(\sup_{\rho \in \Lambda^c} |P_\rho| \geq 1\big) &\leq&
\P\left(\bigcup_{\rho \in \Lambda^c}\{|P_\rho^{(1)}| \geq a\}
\cup \{\|Q_{m-1} \sgn(x)\|_{\infty} \geq a\}
\cup \{\|H^m\|_F \geq \kappa\}\right) \notag\\
&\leq& \P\left(\bigcup_{\rho \in \Lambda^c} \Omega_\rho \cup
\{\|H^m\|_F \geq \kappa\}\right)
\leq \sum_{\rho \in \Lambda^c} \P\big(\Omega_\rho\big) +
\P\big(\|H^m\|_F \geq \kappa\big).\label{P_estim1}
\end{eqnarray}
Markov's inequality now gives
\begin{align}
\P\big(\|H^m\|_F \geq \kappa\big) \,&=\, \P\big(\|H^m\|^2_F \geq \kappa^2\big) \,\leq\, \kappa^{-2} \E[\|H^m\|_F^2].
\label{P:frob}
\end{align}
It remains to investigate $P(\Omega_\rho)$.
To this end let $\beta_t, t=1,\hdots,m$, be positive numbers satisfying
\[
\sum_{t=1}^m \beta_t = a
\]
and let $L_t \in \N$, $t=1,\hdots,m$. For $\rho \in \Lambda^c$, we have
\begin{align}
\P\big(\Omega_\rho\big) \,& = \, \P\left(\sum_{t=1}^m |((KR_\Lambda)^t \sgn(x))_\rho| \geq a\right)
\,\leq\, \sum_{t=1}^m \P\big(|((KR_\Lambda)^t \sgn(x))_\rho| \geq \beta_t\big) \notag\\
&=\, \sum_{t=1}^m \P\left(|((KR_\Lambda)^t \sgn(x))_\rho|^{2L_t} \geq \beta_t^{2L_t}\right)
\label{P1_estim}
\,\leq \, \sum_{t=1}^m \E\left[|((KR_\Lambda)^t\sgn(x))_\rho|^{2L_t}\right]
\beta_t^{-2L_t},
\end{align}
where  Markov's inequality was used to obtain the last inequality.
Let us choose $\beta_t = \beta^{m/L_t}$, that is,
$\beta_t^{-2L_t} = \beta^{-2m}$. This yields
\begin{equation}\label{prob_Ek}
\P\big(\Omega_\rho\big) \,\leq\, \beta^{-2m} \sum_{t=1}^m
\E\left[|((KR_\Lambda)^t\sgn(x))_\rho|^{2L_t}\right]
\end{equation}
and the condition $a=\sum_{t=1}^m \beta_t$ reads
\begin{equation*}
a \,=\, \sum_{t=1}^m \beta^{m/L_t} < 1.
\end{equation*}
This is precisely the first condition in (\ref{cond_kappa00}). Assembling \eqref{P_estim1}, \eqref{P:frob} and \eqref{prob_Ek}  completes the proof.
\end{proof}

\subsection{Estimate of an auxiliary expected value}

Lemma \ref{lem:aux} suggests the investigation of the expected values appearing
in (\ref{prob:est}). As the expectation $\E\left[\|H^m\|_F^2\right]$ was already estimated
in Lemma \ref{lem:frob}, we focus here on terms of the form $\E\left[|((KR_\Lambda)^t\sgn(x))_\rho|^{2L_t}\right]$.

\begin{lemma}\label{lem:expaux} Let $\Lambda \subset \Z_n \times \Z_n$
with $|\Lambda| = \sparsity$. Then for $\rho\notin \Lambda$,
\[
\E\left[|((KR_\Lambda)^t\sgn(x))_\rho|^{2L}\right]
\leq \left(\frac{\sparsity}{n}\right)^{2tL}\sum_{s=1}^{tL} d_2(2tL,s) \left(\frac{n}{\sparsity}\right)^s.
\]
\end{lemma}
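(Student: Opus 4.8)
The plan is to expand $\big((KR_\Lambda)^t\sgn(x)\big)_\rho$ as a sum over walks and then run the very same moment computation used in the proof of Lemma~\ref{lem:frob}; the only genuinely new feature is that the base point $\rho$ is now pinned rather than summed. First I would record the relevant matrix entries. Since each $\psi_\lambda = \pi(\lambda)g$ has unit norm, one checks that $(KR_\Lambda)_{\rho,\rho'} = \langle \psi_{\rho'},\psi_\rho\rangle$ when $\rho'\in\Lambda$ and $\rho\neq\rho'$, and vanishes otherwise. Iterating, with $\xi := \sgn(x)$ supported on $\Lambda$ and writing $\mu_0 = \rho$,
\[
\big((KR_\Lambda)^t\xi\big)_\rho = \sum_{\substack{\mu_1,\ldots,\mu_t\in\Lambda\\ \rho\neq\mu_1\neq\cdots\neq\mu_t}} \langle \psi_{\mu_1},\psi_{\mu_0}\rangle\,\langle \psi_{\mu_2},\psi_{\mu_1}\rangle\cdots\langle \psi_{\mu_t},\psi_{\mu_{t-1}}\rangle\,\xi_{\mu_t},
\]
a sum over length-$t$ walks issuing from $\rho$ into $\Lambda$.

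Next I would raise this to the power $2L$ by writing $|\cdot|^{2L} = (\cdot)^L\,\overline{(\cdot)}^{\,L}$, producing a product of $2L$ such walks, $L$ of them conjugated. Taking expectation over $g$ and using $|\xi_\mu| = 1$ together with the triangle inequality, I bound $\E\big[|((KR_\Lambda)^t\xi)_\rho|^{2L}\big]$ by the sum, over all walk configurations, of the absolute expectations of the resulting products of $2tL$ inner products. At this point the machinery from the proof of Lemma~\ref{lem:frob} applies essentially verbatim: expanding each inner product over its $j$-index with the attendant modulation phases, the independence and unimodularity of the $g(j)$ force the $g$-factors to pair up, so a configuration survives only if the $2tL$ inner-product slots decompose into $s$ disjoint cycles of length at least two (whence the count $d_2(2tL,s)$), each free $j$-summation contributes a factor $n$, each of the $2tL$ slots a factor $n^{-1}$, and the remaining $\Lambda$-indices must solve the associated system of $s$ linear equations of the type \eqref{equation:systemofequations}.

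The one genuinely new point, and the step I expect to be the main obstacle, is the index count. In Lemma~\ref{lem:frob} the walk was closed (a trace), so the $s$ cycle equations summed to the trivial identity $0=0$, had rank only $s-1$, and admitted $\sparsity^{\,m-(s-1)}$ solutions, producing the leading factor $\sparsity$. Here the $2L$ walks are open and all anchored at the fixed vertex $\rho\notin\Lambda$; this anchoring breaks the global linear dependence among the cycle equations, so the system now has full rank $s$ and at most $\sparsity^{\,2tL-s}$ solutions in $\Lambda$. I would verify this rank statement carefully (the fixed $\rho$ occupies the predecessor slot at the head of each walk, so no nonempty subsum of the equations can close up into a trivial identity), and I would check that the interleaving of forward and conjugated walks does not disturb the pairing and cycle bookkeeping, noting that the consecutive-distinctness conditions only shrink the admissible set and hence preserve the upper bound.

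Assembling the pieces then yields
\[
\E\big[|((KR_\Lambda)^t\xi)_\rho|^{2L}\big] \;\leq\; \sum_{s=1}^{tL} d_2(2tL,s)\,\sparsity^{\,2tL-s}\,n^{\,s-2tL} \;=\; \left(\frac{\sparsity}{n}\right)^{2tL}\sum_{s=1}^{tL} d_2(2tL,s)\left(\frac{n}{\sparsity}\right)^{s},
\]
which is the asserted bound; the range $s\leq tL$ simply reflects that $2tL$ slots admit at most $tL$ cycles of length at least two. As a sanity check one may confirm the case $t=L=1$, where only $s=1$ survives and the estimate reduces to $\E[|((KR_\Lambda)\xi)_\rho|^2]=\sum_{\mu\in\Lambda,\,\mu\neq\rho}\E[|\langle \psi_\mu,\psi_\rho\rangle|^2]\leq \sparsity/n$, exactly matching the right-hand side.
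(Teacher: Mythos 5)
Your proposal is correct and follows essentially the same route as the paper: expand $((KR_\Lambda)^t\sgn(x))_\rho$ as a sum over length-$t$ walks anchored at $\rho$, take the $2L$-th power, and rerun the cycle/pairing argument of Lemma~\ref{lem:frob}, with the decisive new observation---which you identify and the paper also makes---that pinning $\rho\notin\Lambda$ at the ends of the walks renders all $s$ cycle equations linearly independent, so the solution count drops to $\sparsity^{2tL-s}$ and the leading factor $\sparsity$ from Lemma~\ref{lem:frob} disappears. Your rank justification and the $t=L=1$ sanity check are consistent with the paper's argument.
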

\begin{proof} Note that
$(KR_\Lambda)^t = K (R_\Lambda K)^{t-1} R_\Lambda = K H^{t-1} R_\Lambda$. Denote
$\sigma = R_\Lambda \sgn(x)$. Then for
$\rho \notin \Lambda$ we have
\[
((K R_\Lambda)^t) \sigma)_{\rho}
\,=\,
\sum_{\lambda_1\neq \lambda_2 \neq \lambda_3 \hdots \neq \lambda_t \in \Lambda}
 \langle \pi(\rho)g,\pi(\lambda_1)g \rangle\, \langle \pi(\lambda_1)g,\pi(\lambda_2)g \rangle
\cdots \langle \pi(\lambda_{t-1})g,\pi(\lambda_t)g \rangle\,\sigma(\lambda_t)\,.
\]
Furthermore, setting
$\lambda^{2u-1}_0:= \lambda^{2u}_t := \rho \notin \Lambda$, $u=1,\hdots,L$, for notational brevity, we can write
\begin{align}
|((K R_\Lambda)^t \sigma)_{\rho}|^{2L}
\,=\, \sum_{\substack{\lambda_1^1, \hdots, \lambda_t^1 \in \Lambda\\ \lambda_0^2,\hdots,\lambda_{t-1}^2\in \Lambda\\ \vdots \\ \lambda_0^{2L},\hdots,\lambda_{2t-1}^{2L}  \in \Lambda\\\lambda_{r-1}^u \neq \lambda_{r}^u}}
 \prod_{u=1}^L \sigma(\lambda_{t}^{2u-1}) \overline{\sigma(\lambda_{0}^{2u})}
\prod_{r=1}^t \langle \pi(\lambda_{r-1}^{2u-1}) g, \pi(\lambda_{r}^{2u-1}) g \rangle \langle \pi(\lambda_{r-1}^{2u}) g, \pi(\lambda_{r}^{2u})g \rangle.\notag
\end{align}
Using linearity of expectation we obtain
\begin{align}
\E |((K R_\Lambda)^t \sigma)_{\rho}|^{2L}
\,=\,  \sum_{\substack{\lambda_1^1, \hdots, \lambda_t^1 \in \Lambda\\ \vdots\\\lambda_0^{2L},\hdots,\lambda_{t-1}^{2L}  \in \Lambda\\\lambda_{r-1}^u \neq \lambda_{r}^u}}
S_{\lambda_1^1,\hdots,\lambda_t^{2L}} F_{\lambda_1^1,\hdots,\lambda_{t}^{2L}}
\end{align}
where $S_{\lambda_1^1,\hdots,\lambda_t^{2L}}$ does not depend on $g$, $\left|S_{\lambda_1^1,\hdots,\lambda_t^{2L}}\right| = 1$ and
\[
F_{\lambda_1^1,\hdots,\lambda_{t}^{2L}} =
\E\left[  \prod_{u=1}^L \prod_{r=1}^t \langle \pi(\lambda_{r-1}^{2u-1}) g, \pi(\lambda_{r}^{2u-1}) g \rangle \langle \pi(\lambda_{r-1}^{2u}) g, \pi(\lambda_{r}^{2u})g \rangle\right].
\]
Let us write $\lambda_r^u = (k_r^u,\ell_r^u)$. If $k_{r-1}^u = k_{r}^u$ for $r \in \{1,\hdots,t\}, u \in \{1,\hdots, 2L\}$ then
necessarily $\ell_{r-1}^u \neq \ell_{r}^u$ due to the condition
$\lambda_{r-1}^{u} \neq \lambda_{r}^u$ on the index set of the sum. Observe
that this holds as well for $r=1$ and $r=t$ since
$\lambda_0^{2u-1} = \lambda_t^{2u} = \rho \notin \Lambda$. Due to
the unimodularity of
$g$ we have then
$\langle \pi(\lambda_{r-1}^u) g, \pi(\lambda_r^u) g \rangle = 0$ and the
corresponding $F_{\lambda_1^1,\hdots,\lambda_t^{2L}}$ does not contribute to the sum.
Hence, in the following we may assume as in Section~\ref{Sec:Cond} that $k_{r-1}^u \neq k_{r}^u$.

As in (\ref{equation:V2sumsexpectedvaluesummands}) we may write
\begin{align}
F_{\lambda_1^1\ldots\lambda_t^{2L}}
   & \, =\, \sum_{\substack{j_1^1,\hdots,j_1^{2L}=1\\j_2^1,\hdots,j_2^{2L} = 1\\ \vdots \\j_t^1,\hdots,j_{t}^{2L}=1}}^n
T_{\ell_0;\ell_1^1,\hdots,\ell_{r}^{2L}}(j_1^1,\hdots,j_t^{2L})
J_{k_0;k_1^1,\hdots,k_r^{2L}}(j_1^1,\hdots,j_t^{2L})\label{F_def}
\end{align}
with
\begin{equation}
T_{\ell_0;\ell_1^1,\hdots,\ell_r^{2L}}(j_1^1,\hdots,j_t^{2L}) \,=\, \prod_{u=1}^{2L}
\prod_{r=1}^t e^{2\pi i j_{r}^u(\ell_{r-1}^u - \ell_r^u)}\label{T_def}
\end{equation}
and
\[
J_{k_0;k_1^1,\hdots,k_r^{2L}}(j_1^1,\hdots,j_t^{2L}) \,=\, \E\left[ \prod_{u=1}^{2L} \prod_{r=1}^t
g(j_{r}^{u} - k_{r-1}^{u}) \overline{g(j_{r}^{u} - k_r^{u})} \right].
\]
As discussed in Section~\ref{Sec:Cond}, the independence of the $g(j)$ implies that the expectation above factorizes
into a product of expectations. However, we have to be careful again since
some of the indices $j_{r}^{u} - k_{r-1}^{u}$ and $j_{r'}^{u'} - k_{r'}^{u'}$ might
equal the same number $j$. In this case one of the factors in the product
equals $\E[|g(j)|^2] = 1/n$ (or a higher power if more than two
indices are equal).  As in the proof of Lemma \ref{lem:frob} we have to count
such cases. Again, they necessitate that $j_1^1,\hdots,j_t^{2L}$ must decompose into $s$ sets
\[
\{j_{\alpha_{11}},j_{\alpha_{12}},\ldots,j_{\alpha_{1r_1}} \},\ \{j_{\alpha_{21}},j_{\alpha_{22}},\ldots,j_{\alpha_{2r_2}} \},\ldots,\{j_{\alpha_{s1}},j_{\alpha_{s2}},\ldots,j_{\alpha_{sr_s}} \},
\]
with $r_1 + r_2 + \hdots + r_s = 2tL$, and for each $q=1,\hdots,s$ we have
\begin{eqnarray}
j_{\alpha_{q1}} - k_{\alpha_{q1}-1} &=& j_{\alpha_{q2}} - k_{\alpha_{q2}},\notag\\
j_{\alpha_{q2}} - k_{\alpha_{q2}-1} & = & j_{\alpha_{q3}} - k_{\alpha_{q3}},\notag\\
& \vdots & \notag\\
j_{\alpha_{qr_q}} - k_{\alpha_{qr_q}-1} & = & j_{\alpha_{q1}} - k_{\alpha_{q1}}.\label{j_set}
\end{eqnarray}
Here, it is understood that $k_{\alpha - 1} = k_{r-1}^u$ if $\alpha = (r,u)$.
As done earlier in Lemma \ref{lem:frob} we represent such a case by the $s$ cycles
\[
        {\alpha_{11}} \rightarrow  {\alpha_{12}}\rightarrow \ldots  \rightarrow {\alpha_{1r_1}}\rightarrow  {\alpha_{11} },
            \quad \ldots \quad,\, {\alpha_{s1}}   \rightarrow {\alpha_{s2}}\rightarrow \ldots  \rightarrow {\alpha_{sr_s}}\rightarrow  {\alpha_{s1}} .
\]
Now if
\begin{equation}
\sum_{p=1}^{r_q} k_{\alpha_{qp}-1} = \sum_{p=1}^{r_q} k_{\alpha_{qp}}\label{k_eq}
\end{equation}
for all $q=1,\hdots,s$ then any vector of indices
$(j_{\alpha_{11}},\hdots,j_{\alpha_{s1}}) \in \{1,\hdots,n\}^s$ gives
\[
J_{k_0;k_1^1,\hdots,k_t^{2L}}(j_1^1,\hdots,j_t^{2L}) = n^{-s}
\]
by setting the other indices $j_{r}^u$ according to (\ref{j_set}).
Plugging this into (\ref{F_def}) and (\ref{T_def}) we realize that
these contributions are canceled out unless
\begin{equation}
\sum_{p=1}^{r_q} \ell_{\alpha_{qp}-1} = \sum_{p=1}^{r_q} \ell_{\alpha_{qp}}\label{l_eq}.
\end{equation}
So we obtain $n^s$ non-zero contributions of absolute
value
$n^{-2tL}$ to $F_{\lambda_1^1,\hdots,\lambda_{t-1}^{2L}}$ if and only if
\[
\sum_{p=1}^{r_q} \lambda_{\alpha_{qp}-1} = \sum_{p=1}^{r_q} \lambda_{\alpha_{qp}} \quad \mbox{ for all } q=1,\hdots,s.
\]
Arguing similarly as in the proof of Lemma \ref{lem:frob}
we conclude that these $s$ equations are linearly independent.
However note that in contrast to the situation there we now have
$\lambda_0^{2u-1} = \lambda_{t}^{2u} = \rho \notin \Lambda$.
With similar arguments as in the end of the proof of Lemma
\ref{lem:frob} we finally
obtain
\[
\E |((K R_\Lambda)^t \sigma)_{\rho}|^{2L}
\leq \left(\frac{|\Lambda|}{n}\right)^{2tL} \sum_{s=1}^{tL} d_2(2tL,s)
\left(\frac{n}{|\Lambda|} \right).
\]
\end{proof}

\subsection{Proof of Theorem \ref{thm:recover}}

Applying Lemma \ref{lem:aux}, using the estimates
of Lemmas \ref{lem:frob} and \ref{lem:expaux}, the definition
of the function $G_{2m}$ in (\ref{def:Gm}) and $|\Lambda^c| \leq n^2$,
we conclude that
the  probability of recovery failure is upper bounded by
\begin{align}
& \kappa^{-2} \E\left[\|H^m\|_F^2\right] + \beta^{-2m}\sum_{\rho \in \Lambda^c} \sum_{t=1}^m \E\left[|((KR_\Lambda)^t \sgn(x))_\rho|^{2L_t} \right]\notag\\
&\qquad \leq
\kappa^{-2} \sparsity G_{2m}(n/\sparsity) + n^2\beta^{-2m} \sum_{t=1}^m G_{2tL_t}(n/\sparsity)\label{prob_bound}
\end{align}
provided that the conditions given in (\ref{cond_kappa00}) hold.

For specific $\sparsity,n$ one may already use this estimate to compute
an explicit probability bound by numerically minimizing over $m$ and the
remaining parameters.
Following the analysis in \cite{ra05-7}
we provide an estimate, which is easier to interpret.

We choose $L_t$ as $m/t$ rounded to the nearest integer.  It is then straightforward to deduce that
\[
t L_t \in \{\left\lceil 2m/3 \right\rceil,\left\lceil 2m/3 \right\rceil{+}1,\hdots,\left\lfloor 4m/3\right\rfloor\},
\quad t \in \{1,\hdots,m\}.
\]
Let $z = n/\sparsity$. Using (\ref{G:estim}) we obtain
\[
\sum_{t=1}^m G_{2tL_t}(z) \,\leq\,
m \max_{m' \in \{\left\lceil 2m/3 \right\rceil,\hdots, \left\lfloor 4m/3 \right\rfloor\}} G_{2m'}(z)
\,\leq\, m \max_{m' \in \{\left\lceil 2m/3 \right\rceil,\hdots, \left\lfloor 4m/3 \right\rfloor\}} \frac{\alpha^{m'}}{4(1-\alpha)}
\,\leq\, m \frac{\alpha^{2m/3}}{4(1-\alpha)}
\]
for any $\alpha <1$ with  $4m'/z \leq \alpha$ for all
$m' \in \{\left\lceil 2m/3 \right\rceil,\hdots,\left\lfloor 4m/3\right\rfloor\}$, which
is the case for
\begin{equation}\label{def:mz2}
m = m_z \,=\,  \left\lfloor \frac{3 \alpha z}{16} \right\rfloor.
\end{equation}
This yields
\begin{equation}\label{eq:E1}
n^2 \beta^{-2m} \sum_{t=1}^{m} G_{2tL_t}(z) \leq n^2 m_z \frac{(\beta^{-3}\alpha)^{2m_z/3}}{4(1-\alpha)}.
\end{equation}
Now choose
\begin{equation}\label{def:alpha}
\alpha \,:=\, \beta^3 e^{-3/2}.
\end{equation}
Then the right hand side of (\ref{eq:E1}) becomes $n^2 m_z \frac{e^{-m_z}}{4(1-\alpha)}$ which is less than or equal to $\varepsilon / 2$ if
and only if
\[
m_z - \log\left(\frac{m_z}{2(1-\alpha)}\right) \geq \log(n^2/\varepsilon).
\]
A numerical test shows that $\beta = 0.47$ is a valid choice and the corresponding $a = \sum_{t=1}^m \beta^{m/L_t}$ will always be less than $0.957$. 
Assume for the moment that $m_z \geq M \in \N$, $M \geq 6$. Since 
$t \mapsto t^{-1}\log(\frac{t}{2(1-\alpha)})$ is monotonically decreasing 
for $t \geq 6$ and $\alpha$ as in (\ref{def:alpha}), $\beta = 0.47$, we
obtain  
\[
m_z-\log\left(\frac{m_z}{2(1-\alpha)}\right) =
m_z\left(1-m_z^{-1}\log\left(\frac{m_z}{2(1-\alpha)}\right)\right)
\geq m_z\left(1- \frac{\log(M(1-\alpha)^{-1}/2)}{M}\right).
\]
The elementary inequality $\lfloor y \rfloor \geq \frac{M}{M+1} y$ for 
$y \geq M$ yields 
\[
m_z = \left\lfloor \frac{3 \alpha z}{16} \right\rfloor \geq 
\frac{3 \alpha z M}{16(M+1)} = \frac{3M}{16(M+1)} \beta^3e^{-3/2} z.
\] 
Altogether, the left hand side of (\ref{eq:E1}) is less than
$\varepsilon / 2$ provided
\begin{equation}\label{z_ineq}
\frac{n}{\sparsity} = z \geq Q(\beta,M)^{-1} \log(n^2/\varepsilon)
\end{equation}
and $m_z \geq M$, where
\[
Q(\beta,M) \,:=\, \frac{3M}{16(M+1)} \beta^3 e^{-3/2}\left(1- \frac{\log(M(1-\beta^3e^{-3/2})^{-1}/2)}{M}\right).
\]
Taking $M=20$ yields
\[
C_1 := Q(0.47,20)^{-1} \approx 273.5.
\]
Without loss of generality we may assume $S\geq 1$ (otherwise $x=0$ and
there is nothing to prove). Then (\ref{z_ineq}) requires at least
$n/\log(n^2) \geq C_1 S \geq C_1$, and a numerical test reveals that
necessarily $n\geq 10000$.
The minimal choice $z = C_1 \log(10000^2)$ yields then
$m_z = \left\lfloor 3 \alpha C_1 \log(10000^2) / 16 \right\rfloor
\,=\, 21 \geq 20 = M$, that is, our initial assumption $m_z \geq M$
is satisfied if
\begin{equation}\label{cond_n1}
n \geq C_1 \sparsity \log(n^2/\varepsilon)
\end{equation}
and, hence, this
ensures $n^2 \beta^{-2m} \sum_{t=1}^m G_{2tL_t}(n/\sparsity) \leq \varepsilon /2$
as well.

Now consider the other term $\sparsity\kappa^{-2} G_{2m}(n/\sparsity)$ in the probability
bound (\ref{prob_bound}). We choose $\kappa$ such that there is
equality in the second inequality of (\ref{cond_kappa00}), that is,
\[
\kappa \,=\, \frac{(1-a)/(1+a)\sparsity^{-3/2}}{1+(1-a)/(1+a)\sparsity^{-3/2}}
\,\geq\, \frac{1-a}{2(1+a)} \sparsity^{-3/2}.
\]
Together with (\ref{G:estim}) and the choice (\ref{def:mz2}) (with $z=n/\sparsity$)
we obtain
\[
\sparsity\kappa^{-2} G_{2m_z}(z)
\,\leq\,  \left(\frac{2(1+a)}{(1-a)}\right)^{2} \sparsity^4 \frac{\alpha^{m_z}}{4(1-\alpha)}.
\]
Requiring that the latter expression is less
than $\varepsilon/2$ is equivalent to
\[
\log(\alpha^{-1}) m_z \,\geq\, \log(\sparsity^4/\varepsilon) + \log\left(2\frac{(1+a)^2}{(1-a)^2(1-\alpha)}\right).
\]
As above assume for the moment that $m_z \geq M$. Plugging in
$\alpha$ from above yields
\[
m_z \,\geq\, \frac{M}{M+1} \frac{3\alpha z}{16} \,=\,
\frac{3M}{16(M+1)} \beta^{3} e^{-3/2} z.
\]
It follows that $\sparsity\kappa^{-2} G_{2m_z}(z) \leq \varepsilon/2$ if
\[
z \,\geq\, \frac{16(M+1)\beta^{-3}e^{3/2}}{3M\log(\beta^{-3}e^{3/2})}
\left(\log(\sparsity^4/\varepsilon) +  \log\left(2\frac{(1+a)^2}{(1-a)^2}(1-\beta^3e^{-3/2})^{-1}\right)\right).
\]
As already remarked the choice $\beta=0.47$ results in $a \leq 0.957$.
Choosing $M=21$ ($m_z \geq 21$ will be ensured by (\ref{cond_n1}) anyway
as shown above) gives
\begin{equation}\label{cond_n2}
z \,\geq\, C_2 (\log(\sparsity^4/ \varepsilon) + C_3)
\end{equation}
with $C_2 \approx 64.1$ and $C_3 \approx 8.35$.

Since $\sparsity \leq n^2$, combining (\ref{cond_n1}) and (\ref{cond_n2})
finally shows the existence of a constant $C$ such that
\[
n \geq C \sparsity \log(n/\varepsilon)
\]
ensures recovery with probability at least $1-\varepsilon$.
This proves Theorem~\ref{thm:recover}.

\end{document}